\def\no{\noindent}
\def\pmatrix{\left(\begin{array}}
\def\endpmatrix{\end{array}\right)}
\newtheorem{theo}{Theorem}
\newtheorem{cor}{Corollary}
\newtheorem{rem}{Remark}
\newtheorem{defi}{Definition}
\newtheorem{prop}[defi]{Proposition}
\title{Long-time analysis of  extended RKN
integrators for  Hamiltonian systems with a solution-dependent high
frequency}%\thanks{This work is supported by Universit\`a di Firenze (project ``Risoluzione numerica di problemi Hamiltoniani ed applicazioni'') and NSFC (Grant No.\,11571128).}}
\author{Bin Wang\,
\footnote{School of Mathematical Sciences, Qufu Normal University,
Qufu 273165, P.R. China; Mathematisches Institut, University of
T\"{u}bingen, Auf der Morgenstelle 10, 72076 T\"{u}bingen, Germany.
The research is supported in part by the Alexander von Humboldt
Foundation and by the Natural Science Foundation of Shandong
Province (Outstanding Youth Foundation) under Grant ZR2017JL003.
E-mail:~{\tt wang@na.uni-tuebingen.de} } \and Xinyuan
Wu\thanks{School of Mathematical Sciences, Qufu Normal University,
Qufu 273165, P.R. China; Department of Mathematics, Nanjing
University, Nanjing 210093, P.R. China. The research is supported in
part by the National Natural Science Foundation of China under Grant
11671200. E-mail:~{\tt xywu@nju.edu.cn}} }
\begin{document}
\maketitle

\begin{abstract} In this paper, we analyse the long-time behaviour of the extended
RKN (ERKN) integrators for solving highly oscillatory Hamiltonian
systems with a slowly varying, solution-dependent high frequency. We
prove that a symmetric ERKN integrator approximately conserves a
modified action and a modified total energy over long time intervals
based on the technique of varying-frequency modulated Fourier
expansion.  An illustrative numerical experiment is carried out and
the numerical results strongly support the theoretical analysis
presented in this paper. As a byproduct of this work, similar
long-time behaviour is also investigated for an RKN method.
\medskip

\no{\bf Keywords:}  Long-time analysis, Extended RKN integrators,
Hamiltonian systems, Solution-dependent high frequency, RKN methods

\medskip
\no{\bf MSC:}65P10, 65L05

\end{abstract}

%=============================================================================================
\section{Introduction}
It is well known that Hamiltonian systems frequently arise in many
areas of science and engineering  such as applied mathematics,
molecular biology, electronics, chemistry, astronomy, mechanics and
quantum physics.
 Numerical simulations of such
systems usually  require a long-time integration. This paper is
devoted to the analysis of the long-time behaviour for
  extended Runge--Kutta--Nystr\"{o}m   integrators (ERKN)   when   applied to the following highly oscillatory Hamiltonian
system
\begin{equation}
 \left\{\begin{aligned}
 &\dot{q}=\nabla_p H(q,p),\qquad \ \ q(0)=q^{0},\\
 &\dot{p}=-\nabla_q H(q,p),\qquad  p(0)=p^{0}, \end{aligned}\right.
\label{H-s}%
\end{equation}
with the Hamiltonian
\begin{equation}H(q,p)=\frac{1}{2}\big(|p_1|^2+ |p_2|^2+ \frac{\omega(q_1)^2}{\epsilon^2} |q_2|^2\big)+U(q),\label{H}%
\end{equation}
where $|\cdot|$ denotes the Euclidean norm,  $\epsilon$ is a small
parameter satisfying $0<\epsilon\ll1$, $\omega(q_1)$ stands for the
frequency function, and the vectors $p = (p_1, p_2) \in
\mathbb{R}^{d_1}\times \mathbb{R}^{d_2}$ and $q = (q_1, q_2)\in
\mathbb{R}^{d_1}\times \mathbb{R}^{d_2}$ represent the positions and
momenta, respectively. It is assumed that the frequency function
$\omega(q_1)$ and the potential $U(q)$ are smooth in the sense that
all their derivatives are bounded independently of $\epsilon$.
Following \cite{Hairer16},  the frequency $\omega(q_1)$ is   also
assumed to be slowly changing and has a fixed positive lower bound
independently of $q_1$, i.e., $\omega(q_1)\geq1$.

Similarly to \cite{Hairer16},  we choose
 the initial values as
 \begin{equation}
\begin{array}
[c]{ll}%
&q_1(0) = O(1),\qquad q_2(0) = O(\epsilon), \\
&p_1(0) = O(1),\qquad p_2(0) = O(1),
\end{array}
  \label{Initial val}%
\end{equation}
which imply that the total energy $H(q,p)$ is bounded by a constant
independent of $\epsilon$. As is known, the action   of the system
\eqref{H-s} is given by
\begin{equation*}
I(q,p)=\frac{1}{2}\frac{|p_2|^2}{\omega(q_1)}  +\frac{\omega(q_1)}{2\epsilon^2}|q_2|^2, %\label{action}%
\end{equation*}
which is $O(1)$ at the initial values \eqref{Initial val}. It has
been proved in \cite{Reich00} that this action is nearly conserved
in the case of a single fast degree of freedom $(d_2 = 1)$. With
regard to the almost conservation of the action in the general case
$d_2 \geq 1$,   it has been shown in \cite{Hairer16} that the
considered Hamiltonian system has the action as an adiabatic
invariant over long times (the details are referred to Theorem 3.2
in \cite{Hairer16}).

It is noted that  the highly oscillatory system \eqref{H} has a
slowly varying, solution-dependent high frequency
$\frac{\omega(q_1)}{\epsilon}$. This system was first studied
analytically in  \cite{Rubin57}. Then it has been researched in many
publications and we refer to \cite{Arnold97,Bornemann98,Cotter04}
for example.  It is clear that when $\omega(q_1)\equiv 1,$ the
system reduces to a  highly oscillatory Hamiltonian system with a
constant-frequency, which has been studied by using the technique of
modulated Fourier expansions in \cite{Hairer00,hairer2006}. In the
recent two decades, modulated Fourier expansions have been developed
as an important mathematical tool in the study of  the long-time
behaviour for various numerical methods and different differential
equations (see, e.g.
\cite{Cohen06,Cohen15,Cohen05,Cohen08-1,Faou14,Gauckler17,Gauckler13,Hairer00,Hairer08,Hairer09,Hairer12-1,Hairer13,iserles08,McLachlan14,Sanz-Serna09,Stern09}).
Very  recently, Hairer and Lubich in \cite{Hairer16} developed a
varying-frequency modulated Fourier expansion for the highly
oscillatory Hamiltonian system \eqref{H}, and then  the long-time
analysis of the St\"{o}rmer--Verlet (SV) method was presented based
on the Fourier expansion.

On the other hand,  in order to solve highly oscillatory systems
effectively and efficiently,   a standard form of extended
Runge--Kutta--Nystr\"{o}m (ERKN) integrators was formulated in
\cite{wu2010-1}, and  in the sense of geometric numerical
integration they have been researched (see, e.g.
\cite{CiCP(2017)_Mei_Liu_Wu,wang-2016,wang2018-IMA,wu2017-JCAM,wubook2015,xinyuanbook2018,wu2012,wu2013-book}).
Recently, the long-term energy behaviour  of ERKN integrators for
highly oscillatory Hamiltonian systems with constant frequencies has
been researched in  \cite{17-new} by the modulated Fourier
expansion. \emph{However, we note a fact that long-time behaviour of
trigonometric integrators for the highly oscillatory system
\eqref{H} with a solution-dependent high frequency has not been
considered and researched so far}.

When dealing with the nonlinear highly oscillatory system
$\ddot{q}+\Omega^2q=f(q)$ such as the well-known Fermi--Pasta--Ulam
problem of Section I.5.1 in \cite{hairer2006}, the ERKN integrator
behaves much better than  the SV method, not only in the accuracy
but also in the energy preservation (see Figure \ref{P5-2} for the
results obtained by a one-stage ERKN integrator which will be
researched in this paper, and the SV method, respectively). A
remarkable advantage can be observed from an ERKN integrator that
the corresponding highly oscillatory linear homogeneous
 equation $\ddot{q}+\Omega^2q=0$ can be exactly solved by both the internal stages and updates of an  ERKN integrator.
 Unfortunately, however, the SV method cannot share this important property.
 This point is crucial since the high oscillation is brought by the
linear part $\Omega^2q$ (see, e.g.
\cite{CiCP(2017)_Mei_Liu_Wu,xinyuanbook2018} for details).
%{\color{blue}Moreover, we use the well known Fermi--Pasta--Ulam
%problem of Section I.5.1 in \cite{hairer2006} to show the numerical
%results (see Figure \ref{P5-2}) of an ERKN integrator (the one
%researched in this paper) and the SV method.
\begin{figure}[!htb]
\centering
\begin{tabular}[c]{cccc}%
  % Requires \usepackage{graphicx}
\includegraphics[width=4cm,height=4.5cm]{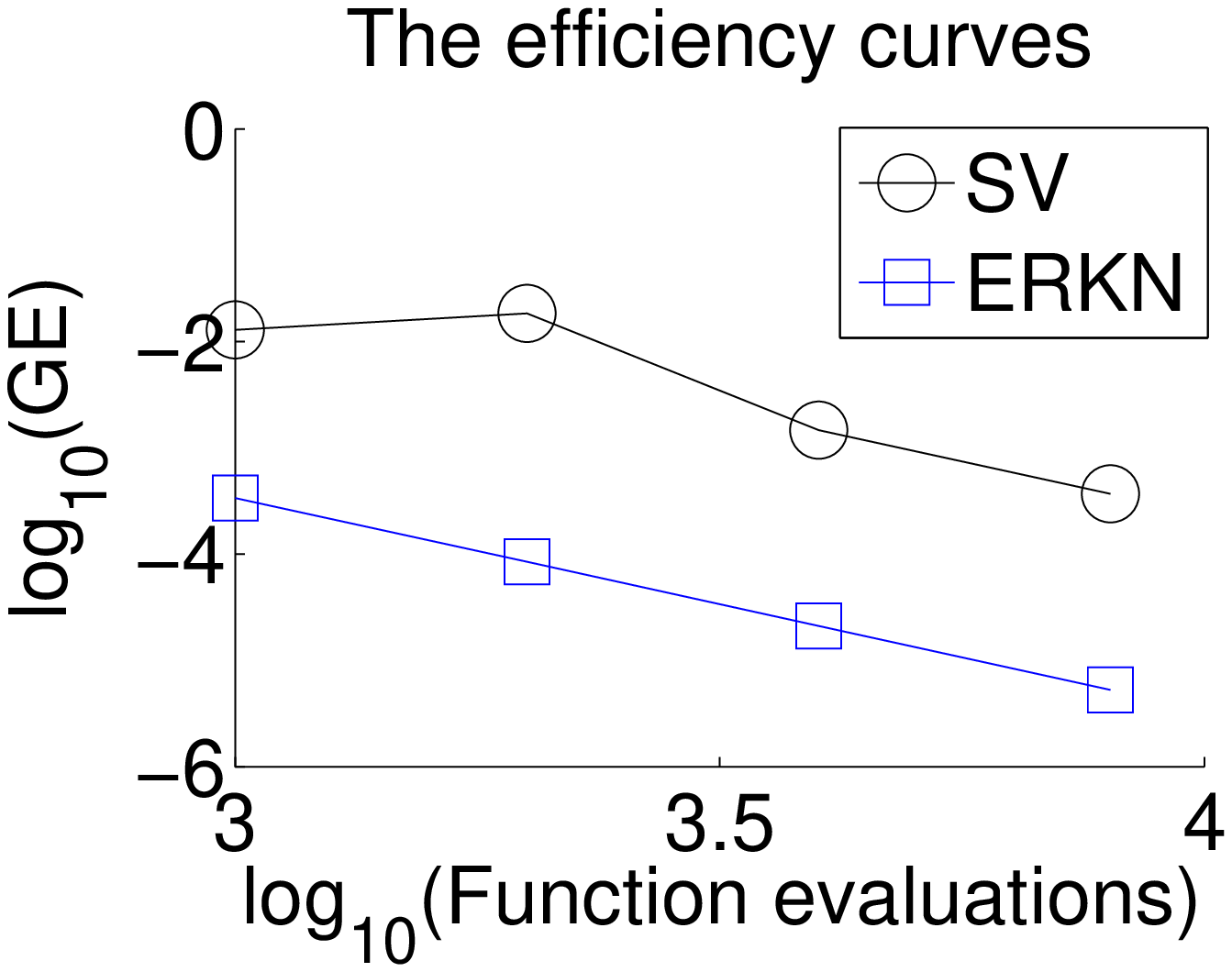}
\includegraphics[width=4cm,height=4.5cm]{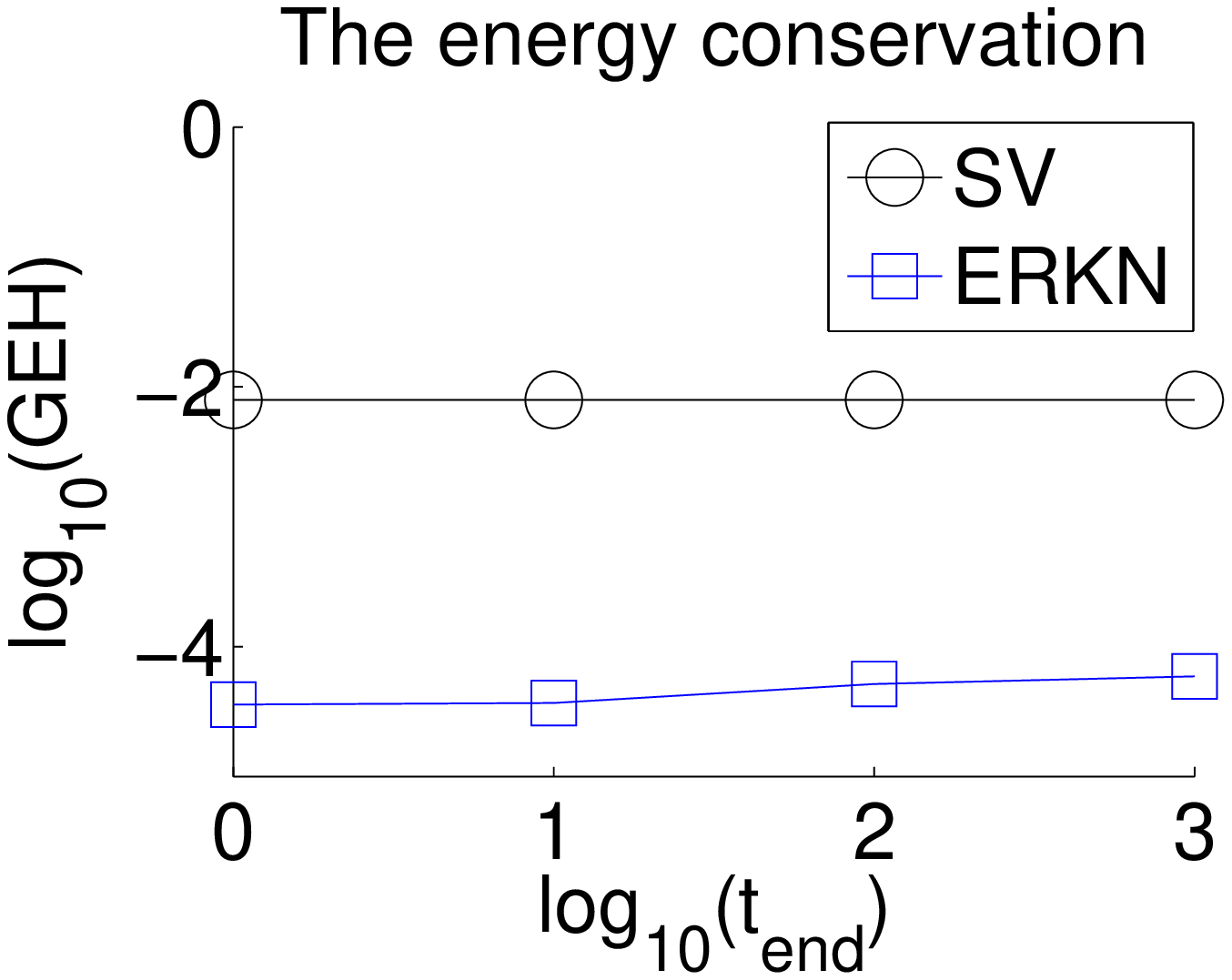}
\end{tabular}
\caption{Left figure: The logarithm of the  global errors against
the logarithm of number of function evaluations  (the problem is
solved in $[0,10]$ with $h=0.01/k$ for $k=0,\ldots,3$). Right
figure: The logarithm of the maximum  global energy errors against
the logarithm of $t_{\textmd{end}}$  (it is solved with $h=0.005$ in
$[0,t_{\textmd{end}}]$ with $t_{\textmd{end}}=10^k$ for
$k=0,\ldots,3$).}\label{P5-2}
\end{figure}

%It can be observed clearly that the ERKN integrator behaves much
%better than the SV method not only in the accuracy but also in the
%energy preservation.

The fact stated above  motivates us to further study the long time
behaviour of ERKN integrators
%when applied to  the highly oscillatory system \eqref{H}.}
%Under the background mentioned above, this paper is devoted to the  long-time behaviour of ERKN integrators
when applied to highly oscillatory Hamiltonian systems with a slowly
varying, solution-dependent high frequency. Using the technique of
varying-frequency modulated Fourier expansion developed in
\cite{Hairer16}, we will present a varying-frequency modulated
Fourier expansion for one-stage explicit ERKN integrators and
analyse their long-time behaviour. \emph{We remark that this paper
seems to be the first research in the literature that rigorously
studies the  long-time behaviour of trigonometric integrators on
highly oscillatory systems with a solution-dependent high
frequency.} As a byproduct  of this paper, we also show the
long-time behaviour of an RKN method, and it turns out that this RKN
method conserves the same modified action and modified energy as
those given in
  \cite{Hairer16}.
  Moreover, the analysis presented in this paper   is suitable for
the  long-time behaviour of the trigonometric integrators given in
Chap. XIII of \cite{hairer2006} and the implicit-explicit methods
considered in \cite{McLachlan14,Stern09} from the case of a constant
high frequency to a solution-dependent high frequency.

 For the long-time
 behaviour of numerical methods applied to system \eqref{H}, according to the comments ``... as a concise proof of
concept, we do not work out these interesting extensions here, but
limit ourselves to the exemplary case of the St\"{o}rmer--Verlet
method" quoted from Harier and Lubich in \cite{Hairer16}, the
analysis of ERKN integrators is more complicated than that of SV
method since ERKN integrators need the constant frequency in
computations. We note that this paper will present ``\emph{these
interesting extensions}" but is not aimed at the superiority of ERKN
integrator compared with the SV method. This present paper tries to
show that the ERKN integrator approximately conserves a modified
action and a modified total energy over long time intervals by using
the technique of varying-frequency modulated Fourier expansion.
Moreover,  we try to get similar long-time behaviour  for an RKN
method  in a simple way, which is yielded as a byproduct of this
paper. In the part of numerical examples, we will show the numerical
behaviour for the ERKN integrator  and SV method. It will be seen
that these two methods have good long-time behaviour and the ERKN
integrator behaves better than the SV method.

It is noted that in comparison with highly oscillatory problems with
constant frequencies, the solution of Hamiltonian systems with a
solution-dependent high frequency admits a varying-frequency
modulated Fourier expansion that extends the constant-frequency
modulated Fourier expansion of \cite{Hairer00,hairer2006}. The novel
expansion was developed by Hairer and Lubich in \cite{Hairer16} and
was   used to the SV method to obtain its long-time behaviour. For
the analysis of ERKN integrators, this new technology will also be
applied  instead of the modulated Fourier expansion which was
considered in \cite{17-new}.

The rest of this paper is organised as follows.  In Section
\ref{sec:Formulation},  we present the scheme of ERKN integrators
for the solution of  \eqref{H-s}. Then the main results of this
paper are given in Section \ref{sec:main results}, and a numerical
experiment is reported in Section \ref{sec:examples}. In order to
prove the main results, the
 varying-frequency modulated Fourier expansion of  an ERKN
integrator is established rigorously in Section  \ref{sec:Analysis
of the methods}. In Section \ref{sec:Almost-invariants}, we show
that the integrator nearly conserves a modified action over a long
term. A similar result about the  near  conservation of  a modified
energy over   a long term is derived in Section
\ref{sec:energy-invariants}. Some concluding remarks are drawn in
Section \ref{sec:conclusions}.

\section{ERKN integrators} \label{sec:Formulation}
In order to apply an ERKN integrator to the highly oscillatory
Hamiltonian system \eqref{H}, we rewrite  \eqref{H}  as the
following system of second-order ordinary differential equations
(ODEs)
 \begin{equation}
\begin{array}
[c]{ll}%
\ddot{q}+\Omega^2q =g(q):=\left(
                            \begin{array}{c}
                              -\frac{\omega(q_1) |q_2|^2}{\epsilon^2}\nabla_{q_1}\omega(q_1)-\nabla_{q_1}U(q) \\
                              -\frac{\omega(q_1)^2-\omega_0^2}{\epsilon^2} q_2-\nabla_{q_2}U(q) \\
                            \end{array}
                          \right),
\end{array}
  \label{prob}%
\end{equation}
where $\omega_0=\omega(q_1(0)),\
\upsilon=\frac{\omega_0}{\epsilon},$ $$\Omega= \left(
                          \begin{array}{cc}
                            0_{d_1\times d_1}  & 0 _{d_2\times d_2}  \\
                            0_{d_1\times d_1} &  \upsilon I_{d_2\times d_2} \\
                          \end{array}
                        \right) ,$$
 and $g$ is   the negative
gradient of the following   real-valued function $W$:
\begin{equation*}W(q)=\frac{\omega(q_1)^2-\omega_0^2}{2\epsilon^2}
|q_2|^2+U(q).
\label{Wq}%
\end{equation*}

\begin{rem}
It is noted that this treatment replaces  a factor
$-\frac{\omega(q_1)^2}{\epsilon^2} q_2$  in the nonlinearity for the
SV method by the presence of
$-\frac{\omega(q_1)^2-\omega_0^2}{\epsilon^2} q_2$ in the second
component of $g(q)$. The bound of this new term is smaller since
 the high frequency
$\frac{\omega(q_1)}{\epsilon}$  varies  slowly. Moreover, the new
presence does not affect the estimates of \eqref{coefficient func1}
compared to the SV method. Therefore, it will not  cause any
problems in the following analysis.
\end{rem}

 A   standard form of ERKN integrators was   formulated   in \cite{wu2010-1}.  Here, we
 represent its one-stage explicit scheme in a compact and
 conventional form.

\begin{defi}
\label{erkn}  (See \cite{wu2010-1}) A one-stage  explicit ERKN
integrator for solving (\ref{prob}) is defined by%
 \begin{equation}
\begin{array}
[c]{ll}%
Q^{n+c_{1}} &
=\cos(c_{1}h\Omega)q^{n}+hc_{1}\mathrm{sinc}(c_{1}h\Omega)p^{n},\\
q^{n+1} & =\cos(h\Omega)q^{n}+h\mathrm{sinc}(h\Omega)p^{n}+h^{2}
 \bar{b}_{1}(h\Omega)g(Q^{n+c_{1}}),\\
p^{n+1} &
=-h\Omega^2\mathrm{sinc}(h\Omega)q^{n}+\cos(h\Omega)p^{n}+h\textstyle
b_{1}(h\Omega)g(Q^{n+c_{1}}),
\end{array}
  \label{erknSchems}%
\end{equation}
where   $h$ is  a  stepsize,  $c_1$ is a real constant  and
$c_1\in[0,1]$, $b_{1}(h\Omega)$ and $\bar{b}_{1}(h\Omega)$ are
matrix-valued and bounded functions. Here $\mathrm{sinc}(h\Omega)$
is defined by $(h\Omega)^{-1}\sin(h\Omega)$.
\end{defi}

 According to the  symmetry conditions of the integrators  derived in \cite{wu2013-book},
the ERKN integrator \eqref{erknSchems} is symmetric if and only if
\begin{equation}\begin{aligned}\label{sym cond}&c_1=1/2,\quad  \bar{b}_1(h\Omega)=(h\Omega(I+\cos(h\Omega)))^{-1}{\sin(h\Omega)}b_1(h\Omega).
\end{aligned}\end{equation}

\begin{rem} Concerning the computational cost of ERKN
integrators compared with the SV method, the main part of the
computational cost consists in the
 nonlinearity since the
matrix-valued functions need to be computed only once. Both
one-stage ERKN integrators and the SV method require the same number
of nonlinear function evaluations.
\end{rem}

\begin{rem}In order to  present this
paper as a concise proof of concept, we only consider one ERKN
integrator in the remainder   of the analysis. It is noted that with
the techniques used in this paper and by some modifications if
necessary, it is feasible to derive the long-time results for other
one-stage ERKN integrators.
\end{rem}

%
%\begin{theo}\label{symplectic thm}
%If there exists a real number $d_1$ such that
%\begin{equation}\begin{aligned}\label{symple cond}&
%b_1(h\upsilon)=d_1\cos((1-c_1)h\upsilon),\
% \ \ \bar{b}_1(h\upsilon)=d_1\frac{\sin((1-c_1)h\upsilon)}{h\upsilon},
%\end{aligned}\end{equation}
%then the ERKN integrator \eqref{erknSchems} is symplectic.
%\end{theo}

\section{Main results} \label{sec:main results}
\subsection{Modified action conservation of an ERKN integrator}
 This part gives the modified action conservation of an ERKN
integrator. We begin with introducing the modified frequency
\begin{equation*}
\omega_h(q_1(t))= \omega(q_1(t))\sqrt{1-\frac{h^2}{4\epsilon^2}
\mathrm{sinc}^2(h\upsilon/2) \omega^2(q_1(t))}
%=\mathrm{sinc}(h\upsilon/2)
%\omega(q_1(t))\sqrt{1-\frac{h^2}{4\epsilon^2}
%\mathrm{sinc}^2(h\upsilon/2) \omega^2(q_1(t))}
\label{modi fre}%
\end{equation*}
and the following modified action
\begin{equation}\begin{aligned}
I_h(q,p)%:=&\frac{1}{2}\mathrm{sinc}^2(h\upsilon)\cos(\frac{1}{2}h\upsilon)(
%\bar{b}_1(h\upsilon))^{-1} \frac{|p_2|^2}{\omega_h(q_1)}
%+\cos(\frac{1}{2}h\upsilon)(
%\bar{b}_1(h\upsilon))^{-1}\frac{\omega_h(q_1)}{2\epsilon^2}|q_2|^2\\
=& \frac{\Psi(h\upsilon,q_1)\mathrm{sinc}^2
(h\upsilon)}{2\mathrm{sinc} (h\upsilon/2)}
  \frac{|p_2|^2}{ 2\omega_h(q_1)} + \frac{\Psi(h\upsilon,q_1) \mathrm{sinc}
(h\upsilon/2)}{2} \frac{ \omega_h(q_1)}{2\epsilon^2}|q_2|^2,
\label{modi act}%
\end{aligned}
\end{equation}
where $$
  \Psi(h\upsilon,q_1)=\cos(\frac{1}{2}h\upsilon)(
\bar{b}_1(h\upsilon))^{-1} +
\frac{1}{2}h^2\upsilon^2\mathrm{sinc}(\frac{1}{2}h\upsilon)(
b_1(h\upsilon))^{-1}\Big(\frac{\mathrm{sinc}^2(h\upsilon/2)}{\mathrm{sinc}^2(h\upsilon)}\frac{\omega_h^2(q_1)}{
\omega_0^2}\Big) .$$ It can be verified that when $\upsilon=0$,
these modified frequency and action become    the modified frequency
and action given by Hairer and Lubich in \cite{Hairer16},
respectively.

\begin{theo}\label{pre mod I} \textbf{(Modified action
conservation.)}
Choose a stepsize $h=\mathcal{O}(\epsilon)$ %%%%%%%%%%%%%%%%%%%%%%%%%%%%%%%%%%%%%%%%%%%%%%%%%%%%%%%%%%%%%%%%%%%%%%%%
\footnote{{  This condition    is used such that assumption
\eqref{numerical cond}  can be true and suitable  bounds of
coefficients functions appearing in the modulated Fourier expansion
  can be obtained.  The same requirement is also used in  \cite{Hairer16}. }}
%%%%%%%%%%%%%%%%%%%%%%%%%%%%%%%%%%%%%%%%%%%%%%%%%%%%%%%%%%%%%%%%%%%%%%%%
and consider the   ERKN integrator \eqref{erknSchems} for solving
\eqref{prob} with initial values \eqref{Initial val}. It is assumed
that for $0\leq nh \leq T$, $q^n=(q_1^n,q_2^n)$ stays in a compact
set $K$ and %%%%%%%%%%%%%%%%%%%%%%%%%%%%%%%%%%%%%%%%%%%%%%%%%%%%%%%%%%%%%%%%%%%%%%%%
\footnote{   It is noted that   the condition \eqref{numerical cond}
is an artificial assumption such that the proof of Proposition
\ref{energy thm} works. A similar assumption (formula (4.4)) has
been given in \cite{Hairer16} and when $\upsilon=0$, the condition
\eqref{numerical cond}  becomes the assumption (4.4) given by Hairer
and Lubich in \cite{Hairer16}.}
%%%%%%%%%%%%%%%%%%%%%%%%%%%%%%%%%%%%%%%%%%%%%%%%%%%%%%%%%%%%%%%%%%%%%%%%
\begin{equation}
\frac{h}{\epsilon}  \mathrm{sinc}(h\upsilon/2)
\omega(q_{1}^{n})\leq2\sin(\frac{\pi}{N+2})
\label{numerical cond}%
\end{equation}
  for some odd integer $N\geq1.$
Moreover, we  require the symmetry conditions \eqref{sym cond} and
choose $$\bar{b}_1(h\upsilon)= \frac{1}{2}
\mathrm{sinc}^2(h\upsilon/2).$$ In this paper, the stepsize $h$ is
chosen such that the coefficients $\bar{b}_1(h\upsilon),
b_1(h\upsilon)$ of ERKN integrators are not zero. We have the
following long-term modified action conservation of the ERKN
integrator:
\begin{equation*}
I_h(q_n,p_n)=I_h(q_0,p_0)+\mathcal{O}(\epsilon)
\end{equation*}
for $0\leq n h\leq \epsilon^{-N+1},$ where the constant  symbolized
by $\mathcal{O}$ depends on $N$ but  is independent of $n, h$ and $
\epsilon$.
\end{theo}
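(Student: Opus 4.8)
\medskip
\noindent\textbf{Proof strategy (outline).}\quad
The plan is to follow the programme of Hairer and Lubich \cite{Hairer16}: construct a varying-frequency modulated Fourier expansion of the ERKN integrator, extract from the associated modulation system an almost-invariant that coincides with $I_{h}$ up to $\mathcal{O}(\epsilon)$, and patch the local expansions over $\mathcal{O}(\epsilon^{-N+1})$ short subintervals. First I would represent the numerical solution of \eqref{prob} on an interval of length $\mathcal{O}(1)$ in the form
\begin{equation*}
q^{n}=\sum_{|k|\le N}z^{k}(t_{n})\,\mathrm{e}^{\mathrm{i}k\psi(t_{n})/\epsilon}+R_{n},\qquad t_{n}=nh,
\end{equation*}
where the phase satisfies $\dot{\psi}=\omega_{h}(q_{1})$ --- the physical frequency $\omega$ is replaced by the modified frequency $\omega_{h}$ because of the filter functions $\cos(h\Omega)$, $\mathrm{sinc}(h\Omega)$ built into \eqref{erknSchems} --- the coefficient functions obey $z^{0}=\mathcal{O}(1)$, $z_{2}^{\pm1}=\mathcal{O}(\epsilon)$ with higher harmonics and all derivatives correspondingly small, and the defect of the resulting modulation system is $\mathcal{O}(\epsilon^{N+1})$. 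Here the assumption \eqref{numerical cond} is used twice: it forces $\tfrac{h^{2}}{4\epsilon^{2}}\mathrm{sinc}^{2}(h\upsilon/2)\,\omega^{2}(q_{1}^{n})<1$, so that $\omega_{h}$ is well defined, and --- with its odd integer $N$ and the bound $2\sin(\pi/(N+2))$, exactly as in \cite{Hairer16} --- it keeps the numerical non-resonance denominators that appear when one solves for $z^{k}$, $|k|\le N$, bounded away from zero, which is what guarantees those coefficients exist with the stated bounds. The requirement $h=\mathcal{O}(\epsilon)$, the symmetry relations \eqref{sym cond}, and the choice $\bar b_{1}(h\upsilon)=\tfrac12\mathrm{sinc}^{2}(h\upsilon/2)$ all enter at this stage.

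Next I would use that, because the ERKN integrator is symmetric, the modulation system carries a Lagrangian structure; its invariance under the grading transformation $z^{k}\mapsto\mathrm{e}^{\mathrm{i}k\theta}z^{k}$ --- a built-in feature of the modulated Fourier expansion encoding the $U(1)$-symmetry of the dominant high-frequency part of \eqref{prob} --- then produces, via the Noether-type argument of \cite{Hairer16} (differentiate the modulation equations along the symmetry and form the suitable $k$-weighted sum), a function $\mathcal{I}[z](t)$ of the modulation sequence whose time derivative is controlled by the $\mathcal{O}(\epsilon^{N+1})$ defect; consequently $\mathcal{I}[z]$ is constant up to $\mathcal{O}(\epsilon^{N})$ on each $\mathcal{O}(1)$ subinterval. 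Evaluating $\mathcal{I}[z]$ on the dominant modulation terms, the $k=\pm1$ contributions assemble --- through the ERKN update coefficients $b_{1}(h\upsilon)$, $\bar b_{1}(h\upsilon)$ and the definition of $\Psi(h\upsilon,q_{1})$ --- into exactly the modified action $I_{h}(q^{n},p^{n})$ of \eqref{modi act} up to a bounded $\mathcal{O}(\epsilon)$ remainder; the $\mathrm{sinc}$-ratios in \eqref{modi act} are precisely the imprint of these coefficients, and the whole expression collapses to the Hairer--Lubich modified action when $\upsilon=0$.

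Finally I would patch the local expansions: decompose $[0,\epsilon^{-N+1}]$ into $\mathcal{O}(\epsilon^{-N+1})$ subintervals of length $\mathcal{O}(1)$; on each one the numerical solution has such an expansion, and the modulation functions at the right endpoint of a subinterval serve as admissible starting data for the expansion on the next one up to an error $\mathcal{O}(\epsilon^{N})$, by stability and uniqueness of the modulation system under the bounds above. Hence $\mathcal{I}[z]$ changes by at most $\mathcal{O}(\epsilon^{N})$ per subinterval, so
\begin{equation*}
\mathcal{I}[z](t_{n})-\mathcal{I}[z](0)=\mathcal{O}\!\big(\epsilon^{-N+1}\cdot\epsilon^{N}\big)=\mathcal{O}(\epsilon)\qquad\text{for }0\le nh\le\epsilon^{-N+1},
\end{equation*}
as long as $q^{n}$ stays in the compact set $K$ and \eqref{numerical cond} holds. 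Combining this with $I_{h}(q^{n},p^{n})=\mathcal{I}[z](t_{n})+\mathcal{O}(\epsilon)$ at every grid point --- and the same identity at $n=0$ --- yields $I_{h}(q^{n},p^{n})=I_{h}(q^{0},p^{0})+\mathcal{O}(\epsilon)$, with the implied constant depending only on $N$.

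The step I expect to be the main obstacle is the construction of the varying-frequency modulated Fourier expansion itself, together with the coefficient bounds. Unlike the St\"{o}rmer--Verlet method, the ERKN integrator carries the \emph{constant} frequency $\upsilon=\omega_{0}/\epsilon$ inside its filter functions while the physical high frequency $\omega(q_{1})/\epsilon$ drifts, so fixing the phase $\psi$ and the coefficients $z^{k}$ becomes a coupled fixed-point problem; one must verify that the ERKN filter functions do not spoil the invertibility of the denominators arising in the modulation recursion --- which is precisely where \eqref{numerical cond} is needed --- while still retaining enough of the Lagrangian/Hamiltonian structure of the modulation system for the Noether step to go through, and this is where the symmetry conditions \eqref{sym cond} are essential (the argument genuinely fails for non-symmetric ERKN integrators). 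Once the expansion and its bounds are in hand, the almost-invariant and patching arguments run along the lines of \cite{Hairer16}.
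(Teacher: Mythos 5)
Your outline follows essentially the same route as the paper's proof: the varying-frequency modulated Fourier expansion of the ERKN integrator under the non-resonance condition \eqref{numerical cond} (Proposition \ref{energy thm}), the Noether-type almost-invariant $\mathcal{I}[\zeta,\eta]$ obtained from the invariance of the modulation potential under $\tilde q^k\mapsto \mathrm{e}^{\mathrm{i}k\lambda/\epsilon}\tilde q^k$ together with its identification with $I_h$ up to $\mathcal{O}(\epsilon)$ (Proposition \ref{second invariant thm}), and the patching over $\mathcal{O}(\epsilon^{-N+1})$ short intervals as in Section XIII.7 of \cite{hairer2006}. The one slip is your phase law: the dominant $k=\pm1$ modulation equation forces $\sin\big(\tfrac{h\dot{\phi}}{2\epsilon}\big)=\tfrac{h}{2\epsilon}\,\mathrm{sinc}(h\upsilon/2)\,\omega(\zeta^{0}_{h,1})$ as in \eqref{phase func}, i.e.\ $\dot{\phi}=\tilde{\omega}_h(q_1)$ (the arcsin-type modified frequency), not $\dot{\phi}=\omega_h(q_1)$; the square-root frequency $\omega_h$ enters instead through $\sin(h\dot{\phi}/\epsilon)=\tfrac{h}{\epsilon}\,\mathrm{sinc}(h\upsilon/2)\,\omega_h$ when the almost-invariant is evaluated on the dominant terms.
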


\subsection{Modified energy conservation of an ERKN integrator}
 A modified energy conservation of an ERKN integrator will be presented in this part. To do this, we introduce another modified frequency
\begin{equation*}
\tilde{\omega}_h(q_1)= \frac{2\epsilon}{h} \mathrm{arcsin}\big(
\frac{h}{2\epsilon} \mathrm{sinc}(h\upsilon/2) \omega(q_1)\big)
\label{modi fre ano}%
\end{equation*}
and the following modified energy
\begin{equation}\begin{aligned}
H_h(q,p)%:=&\frac{1}{2}\mathrm{sinc}^2(h\upsilon)\cos(\frac{1}{2}h\upsilon)(
%\bar{b}_1(h\upsilon))^{-1} \frac{|p_2|^2}{\omega_h(q_1)}
%+\cos(\frac{1}{2}h\upsilon)(
%\bar{b}_1(h\upsilon))^{-1}\frac{\omega_h(q_1)}{2\epsilon^2}|q_2|^2\\
=&\frac{1}{2}|p_1|^2+   \tilde{\omega}_h(q_1) I_h(q,p) + U(q)+ \Big(
1-\Psi(h\upsilon,q_1) \bar{b}_1(h\upsilon) \Big) \frac{
\omega^2(q_1)-\omega_0^2}{\epsilon^2} |q_2|^2.
\label{modi ene}%
\end{aligned}
\end{equation}
It is noted that  when $\upsilon=0$, these modified frequency and
modified energy reduce to those introduced by  Hairer and Lubich in
\cite{Hairer16}.

\begin{theo}\label{pre mod H} \textbf{(Modified energy
conservation.)} Under the conditions of Theorem \ref{pre mod I}, the
following long-term modified energy conservation holds
\begin{equation*}
H_h(q_n,p_n)=H_h(q_0,p_0)+\mathcal{O}(\epsilon)
\end{equation*}
for $0\leq n h\leq \epsilon^{-N+1},$ where the constant  symbolized
by $\mathcal{O}$ depends on $N$ but  is independent of $n,\ h$ and $
\epsilon$.
\end{theo}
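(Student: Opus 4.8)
The plan is to derive the modified energy conservation from the modified action conservation (Theorem \ref{pre mod I}) together with the machinery of the varying-frequency modulated Fourier expansion established in Section \ref{sec:Analysis of the methods}, following closely the strategy used by Hairer and Lubich for the SV method but carrying the extra $\upsilon$-dependent factors through the computation. The starting point is the modulated Fourier expansion of the numerical solution, which writes $q^n$ (componentwise in $q_1$ and $q_2$) as a sum $\sum_k z^k(t)\mathrm{e}^{\mathrm{i}k\phi(t)/\epsilon}$ of modulation functions $z^k$ multiplied by oscillatory phases built from a slowly varying phase function $\phi$ with $\dot\phi=\omega(q_1)$ (up to $h$-corrections); the key a priori bounds on the $z^k$ and their derivatives are exactly those quoted as \eqref{coefficient func1}. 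The first step is to write down the formal invariant of the modulated system: as in the constant-frequency theory, the modulation equations possess an exact energy-type invariant $\mathcal{H}[z](t)$, obtained by testing the modulated equations against $\dot z$ and summing over the Fourier indices, which is constant along the modulated flow up to exponentially small (in $1/\epsilon$, hence $\mathcal{O}(\epsilon^N)$ after truncation) remainder terms.

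The second step is to identify $\mathcal{H}[z]$, evaluated on the modulation functions of the actual numerical solution, with $H_h(q^n,p^n)$ up to $\mathcal{O}(\epsilon)$. This is where the explicit formula \eqref{modi ene} comes from: one splits $\mathcal{H}[z]$ into the contribution of the non-oscillatory index $k=0$ part — which reproduces $\tfrac12|p_1|^2+U(q)$ plus the slowly-varying remainder term involving $\omega^2(q_1)-\omega_0^2$ — and the contribution of the dominant oscillatory indices $k=\pm1$, which after using the relation $\dot\phi=\omega(q_1)$ and the definition of $\tilde\omega_h$ collapses to $\tilde\omega_h(q_1)\,I_h(q,p)$, with $I_h$ the modified action already isolated in Theorem \ref{pre mod I}. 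The appearance of $\tilde\omega_h$ rather than $\omega_h$ reflects the discrete dispersion relation of the ERKN propagator $\cos(h\Omega),\ \mathrm{sinc}(h\Omega)$: the numerical frequency seen by the $k=\pm1$ mode is $\tfrac{2\epsilon}{h}\arcsin(\tfrac{h}{2\epsilon}\mathrm{sinc}(h\upsilon/2)\omega(q_1))$, exactly $\tilde\omega_h(q_1)$. One must check that the higher indices $|k|\geq2$ contribute only $\mathcal{O}(\epsilon)$, which follows from the decay estimates in \eqref{coefficient func1}, and that the factor $\Psi(h\upsilon,q_1)$ defined after \eqref{modi act} is precisely what makes the coefficient matching between the scheme coefficients $b_1,\bar b_1$ and the invariant work out.

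The third step assembles the pieces: along $0\le nh\le\epsilon^{-N+1}$ one has $\mathcal{H}[z](t_{n+1})-\mathcal{H}[z](t_n)=\mathcal{O}(h\epsilon^N)$ from the defect estimates of the modulated Fourier expansion, summing to $\mathcal{O}(\epsilon)$ over the whole interval; combined with $\mathcal{H}[z](t_n)=H_h(q^n,p^n)+\mathcal{O}(\epsilon)$ from step two and a patching argument (the modulated Fourier expansion is constructed on intervals of length $\mathcal{O}(1)$ and re-expanded, with the mismatch in $\mathcal{H}$ across patch boundaries controlled as in \cite{Hairer16}), one obtains $H_h(q^n,p^n)=H_h(q^0,p^0)+\mathcal{O}(\epsilon)$. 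I expect the main obstacle to be the bookkeeping in step two: separating the genuinely slowly-varying part of $\mathcal{H}[z]$ from the oscillatory part and showing that the slowly-varying leftover is exactly $\big(1-\Psi(h\upsilon,q_1)\bar b_1(h\upsilon)\big)\frac{\omega^2(q_1)-\omega_0^2}{\epsilon^2}|q_2|^2$ requires carefully tracking how the substitution $\Omega^2 q\to\Omega^2 q$ versus the true $\frac{\omega(q_1)^2}{\epsilon^2}q_2$ redistributes terms between the "linear'' and "nonlinear'' parts of \eqref{prob} — the same subtlety flagged in the Remark after \eqref{prob}. A secondary technical point is that, unlike in Theorem \ref{pre mod I} where time-differentiation of $I_h$ is not needed, here one uses $\dot\phi=\omega(q_1)+\mathcal{O}(h^2)$ and the chain rule through $q_1$, so the slow drift of the frequency must be shown to enter only at $\mathcal{O}(\epsilon)$; this is handled by the bound $\dot q_1=\mathcal{O}(1)$ and the smoothness assumptions on $\omega$.
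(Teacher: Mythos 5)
Your proposal follows essentially the same route as the paper's proof: the energy almost-invariant $\mathcal{H}[\zeta,\eta]$ is obtained by testing the modulated equations against the time derivatives of the modulation functions and recognizing a total derivative (via the ``magic formulas''), it is then identified with $H_h$ up to $\mathcal{O}(\epsilon)$ through the $k=0$ contribution (yielding $\tfrac{1}{2}|p_1|^2+U(q)$ together with the $\bigl(1-\Psi(h\upsilon,q_1)\bar{b}_1(h\upsilon)\bigr)$ correction coming from the $\omega_0$-splitting of the nonlinearity) and the $k=\pm1$ contribution (yielding $\tilde{\omega}_h(q_1)\,I_h(q,p)$ via the discrete dispersion relation), and the long-time statement follows by the patching argument of Section XIII.7 of \cite{hairer2006}. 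One minor inaccuracy: the exact relation from \eqref{phase func} is $\dot{\phi}(t)=\tilde{\omega}_h(\zeta_{h,1}^{0}(t))$, not $\dot{\phi}=\omega(q_1)+\mathcal{O}(h^2)$ (the discrepancy is not small when $h\sim\epsilon$), but since your main identification step correctly uses $\tilde{\omega}_h$ as the numerical frequency of the $k=\pm1$ mode, this does not affect the argument.
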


From Section \ref{sec:Analysis of the methods} to Section
\ref{sec:energy-invariants}, we will  prove the long-time
 behaviour of ERKN integrators stated in Theorems \ref{pre mod I} and \ref{pre mod H}.  For the analysis, the following key points will be considered one
by one.
  \begin{itemize}
 \item In Section \ref{sec:Analysis of the methods}   the varying-frequency
modulated Fourier expansion for ERKN integrators  is derived and the
bounds of the coefficient functions and defects are estimated.

\item In Section \ref{sec:Almost-invariants},  an invariant
$\mathcal{I}$ of the modulated Fourier expansion is shown  by
Proposition \ref{second invariant thm} and its relationship with the
modified actions is established. On the basis of these analyses,
Theorem \ref{pre mod I} is proved.

\item In Section \ref{sec:energy-invariants}, Proposition \ref{H invariant thm} shows another
invariant $\mathcal{H}$ of the modulated Fourier expansion. The
relationship between this invariant  and the modified  energy is
also established in this theorem. Finally,  the long-time
near-conservation of the modified energy $H_h$ stated in Theorem
\ref{pre mod H} is proved.
\end{itemize}

 It is remarked that the above  procedure is  a standard approach to the study of the
long-time behavior for numerical methods of Hamiltonian ordinary
differential equations by using modulated Fourier expansions (see,
e.g. \cite{Cohen06,Cohen15,Cohen05,Hairer00,Hairer09,Hairer16}). The
detailed proof of each key point   closely follows \cite{Hairer16}
 but with some necessary modifications adapted to the ERKN integrator.
 For brevity, we will only present the differences and skip the same derivations as those of \cite{Hairer16}.

%%%%%%%%%%%%%%%%%%%%%%%%%%%%%%%%%%%%%%%%%%%%%%%%%%%%%%%%%%%%%%%%%%%%%%%%%%%%%%%%%%
\subsection{Long time conservation of an RKN method}  It
is noted that when  $\upsilon=0$, the ERKN integrator \label{eqref}
for solving (\ref{prob}) reduces to the following one-stage explicit
RKN method  for solving (\ref{H})
 \begin{equation}
\begin{array}
[c]{ll}%
Q^{n+c_{1}} &
= q^{n}+hc_{1} p^{n},\\
q^{n+1} & = q^{n}+h p^{n}+h^{2}
 \bar{b}_{1} g(Q^{n+c_{1}}),\\
p^{n+1} & =p^{n}+h\textstyle b_{1}g(Q^{n+c_{1}}),
\end{array}
  \label{RKN}%
\end{equation}
where  $g$ is the nonlinearity in  \eqref{prob} without $\omega_0$.
%$$g=\left(
%                            \begin{array}{c}
%                              -\frac{\omega(q_1) |q_2|^2}{\epsilon^2}\nabla_{q_1}\omega(q_1)-\nabla_{q_1}U(q) \\
%                              -\frac{\omega(q_1)^2}{\epsilon^2} q_2-\nabla_{q_2}U(q) \\
%                            \end{array}
%                          \right).$$
All the analysis presented in this paper holds true for   this
special case. Accordingly, we obtain the following result for the
RKN method \eqref{RKN}.

\begin{cor}\label{pre RKN}
\textbf{(Long time conservations of an RKN method.)} Choose a
stepsize $h=\mathcal{O}(\epsilon)$ and assume that   the numerical
solution of the   RKN method \eqref{RKN}  stays in a compact set.
Under the conditions $c_1=\frac{1}{2},  \bar{b}_1= \frac{1}{2}, b_1=
1$ and $ \frac{h}{\epsilon}
\omega(q_{1}^{n})\leq2\sin(\frac{\pi}{N+2}) $
  for some odd integer $N\geq1,$  we have the following long time conservations of the RKN
  method \eqref{RKN}
\begin{equation*}\begin{array}
[c]{ll}
\hat{I}_h(q_n,p_n)=\hat{I}_h(q_0,p_0)+\mathcal{O}(\epsilon),\ \
\hat{H}_h(q_n,p_n)=\hat{H}_h(q_0,p_0)+\mathcal{O}(\epsilon)\end{array}
\end{equation*}
for $0\leq n h\leq \epsilon^{-N+1},$ where the constants   depend on
$N$ but  are independent of $n,\ h$ and $ \epsilon$. Here the
modified action $\hat{I}_h$ and modified energy $\hat{H}_h$ are
given  respectively by:
\begin{equation*}\begin{array}[c]{ll}
&\hat{I}_h(q,p)=    \frac{|p_2|^2}{ 2\omega_h(q_1)} + \frac{
\omega_h(q_1)}{2\epsilon^2}|q_2|^2\ \ \mathrm{with} \ \omega_h(q_1)=
\omega(q_1(t))\sqrt{1-\frac{h^2}{4\epsilon^2} \omega^2(q_1(t))},\\
&\hat{H}_h(q,p)=\frac{1}{2}|p_1|^2+   \frac{2\epsilon}{h}
\mathrm{arcsin}\big( \frac{h}{2\epsilon}  \omega(q_1)\big)
\hat{I}_h(q,p) + U(q),
\end{array}\end{equation*}
which  are  the same as those given by  Hairer and Lubich in
\cite{Hairer16}.
\end{cor}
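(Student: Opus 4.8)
The plan is to obtain Corollary~\ref{pre RKN} as the $\upsilon=0$ specialization of Theorems~\ref{pre mod I} and~\ref{pre mod H}. First I would observe that dropping the filtering frequency — i.e.\ keeping the whole term $\frac{\omega(q_1)^2}{\epsilon^2}q_2$ inside the nonlinearity, equivalently setting $\omega_0=0$ so that $\upsilon=0$ and $\Omega=0$ — gives $\cos(h\Omega)=I$, $\mathrm{sinc}(h\Omega)=I$ and $\Omega^2\mathrm{sinc}(h\Omega)=0$, under which the ERKN scheme \eqref{erknSchems} collapses exactly onto the RKN scheme \eqref{RKN}. The coefficient $\bar b_1(h\upsilon)=\tfrac12\mathrm{sinc}^2(h\upsilon/2)$ of Theorem~\ref{pre mod I}, evaluated at $\upsilon=0$, equals $\tfrac12$, and the symmetry relation \eqref{sym cond} at $\upsilon=0$ reads $\bar b_1=\tfrac12 b_1$, forcing $b_1=1$; so the coefficient choice $c_1=\tfrac12$, $\bar b_1=\tfrac12$, $b_1=1$ in the corollary is precisely the $\upsilon=0$ instance of the hypotheses of Theorems~\ref{pre mod I}--\ref{pre mod H}.

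Next I would go through Sections~\ref{sec:Analysis of the methods}--\ref{sec:energy-invariants} and verify that each step survives — in fact simplifies — at $\upsilon=0$, since there are then no operator functions $\cos(h\Omega),\mathrm{sinc}(h\Omega)$ left to estimate. The numerical assumption \eqref{numerical cond} becomes $\frac{h}{\epsilon}\omega(q_1^n)\le 2\sin(\frac{\pi}{N+2})$, exactly the hypothesis of the corollary; the varying-frequency modulated Fourier expansion of Section~\ref{sec:Analysis of the methods} stays valid (parallel to the Hairer--Lubich analysis of the St\"ormer--Verlet method) with the same bounds \eqref{coefficient func1} on its coefficients and defects; and the two almost-invariants $\mathcal{I}$ and $\mathcal{H}$ of Propositions~\ref{second invariant thm} and~\ref{H invariant thm}, together with their relation to the modified action and energy, continue to hold. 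Therefore Theorems~\ref{pre mod I} and~\ref{pre mod H} remain valid with $I_h,H_h$ replaced by their $\upsilon=0$ limits over $0\le nh\le\epsilon^{-N+1}$, with $\mathcal{O}$-constants depending only on $N$.

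The last step is an explicit evaluation of those limits. At $\upsilon=0$ one has $\mathrm{sinc}(h\upsilon)=\mathrm{sinc}(h\upsilon/2)=\cos(\tfrac12 h\upsilon)=1$ and the $\tfrac12 h^2\upsilon^2$ term of $\Psi$ vanishes, so $\Psi(h\upsilon,q_1)=(\bar b_1)^{-1}=2$; hence $\Psi\bar b_1=1$ and the last term of \eqref{modi ene} disappears. The prefactors $\frac{\Psi\,\mathrm{sinc}^2(h\upsilon)}{2\,\mathrm{sinc}(h\upsilon/2)}$ and $\frac{\Psi\,\mathrm{sinc}(h\upsilon/2)}{2}$ in \eqref{modi act} both equal $1$, so $I_h$ reduces to $\hat I_h(q,p)=\frac{|p_2|^2}{2\omega_h(q_1)}+\frac{\omega_h(q_1)}{2\epsilon^2}|q_2|^2$; the modified frequencies lose the factor $\mathrm{sinc}(h\upsilon/2)$ and become $\omega_h(q_1)=\omega(q_1)\sqrt{1-\frac{h^2}{4\epsilon^2}\omega^2(q_1)}$ and $\tilde\omega_h(q_1)=\frac{2\epsilon}{h}\arcsin\!\big(\frac{h}{2\epsilon}\omega(q_1)\big)$; and $H_h$ reduces to $\hat H_h(q,p)=\tfrac12|p_1|^2+\tilde\omega_h(q_1)\hat I_h(q,p)+U(q)$. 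These coincide with the modified action and energy of \cite{Hairer16}, which completes the proof.

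The one point needing a little care — and where I expect whatever residual work there is to lie — is to confirm that the limit $\upsilon=0$ creates no degeneracy in the quantities built in Sections~\ref{sec:Almost-invariants}--\ref{sec:energy-invariants} (no division by $\mathrm{sinc}(h\upsilon)$ or by $\omega_0$, and the genuinely high frequency $\omega(q_1)/\epsilon$ still present to drive the expansion) and that $h=\mathcal{O}(\epsilon)$ still validates the coefficient bounds. Both are immediate, since the $\Omega=0$ expansion is a strict simplification of the general one and coincides with the one already established in \cite{Hairer16}.
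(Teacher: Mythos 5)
Your proposal is correct and follows exactly the paper's own route: the paper disposes of Corollary~\ref{pre RKN} with the single remark that ``all the analysis presented in this paper holds true for this special case,'' i.e.\ the $\upsilon=0$ specialization of Theorems~\ref{pre mod I} and~\ref{pre mod H}, which is precisely what you carry out. Your explicit verification that $\Psi(0,q_1)\bar b_1=1$, that the prefactors in \eqref{modi act} reduce to $1$, and that the symmetry condition forces $b_1=1$ is more detail than the paper provides, but it is the same argument.
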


\section{A numerical experiment} \label{sec:examples}

As an illustrative example, we consider the Fermi--Pasta--Ulam
problem of Section I.5.1 in \cite{hairer2006}, where we replace the
constant frequency by a slowly varying, solution-dependent high
frequency (see \cite{Hairer16}). The system can be expressed by
\eqref{H} with $\omega(q_1)=1+\sin^2(q_{11})$ ($q_{11}$ denotes the
first entry of the vector $q_{1}$) and
$$U(q)=\dfrac{1}{4}[(q_{11}-q_{21})^{4}
+ (q_{12}-q_{13}-q_{11}-q_{21} )^{4}+  (q_{13}-q_{21}-q_{12}-q_{22}
)^{4}+(q_{13}+q_{23})^{4}].
$$
Following \cite{Hairer16,hairer2006}, we choose $\epsilon=0.01$ and
\begin{equation*}\label{initial date} \ q_{11}(0)=1,\ p_{11}(0)=1,\
q_{21}(0)=\dfrac{1}{\omega},\ p_{21}(0)=1,
 \end{equation*}
with zero for the remaining initial values.  The coefficients of the
one-stage explicit ERKN integrator are given by
$$c_1=\frac{1}{2}, \quad \bar{b}_1(h\upsilon)= \frac{1}{2}
\mathrm{sinc}^2(h\upsilon/2), \quad b_1(h\upsilon)=
\cos(h\upsilon/2)\mathrm{sinc}(h\upsilon/2).$$

\begin{figure}
% Use the relevant command to insert your figure file.
% For example, with the graphicx package use
\includegraphics[width=12cm,height=2.5cm]{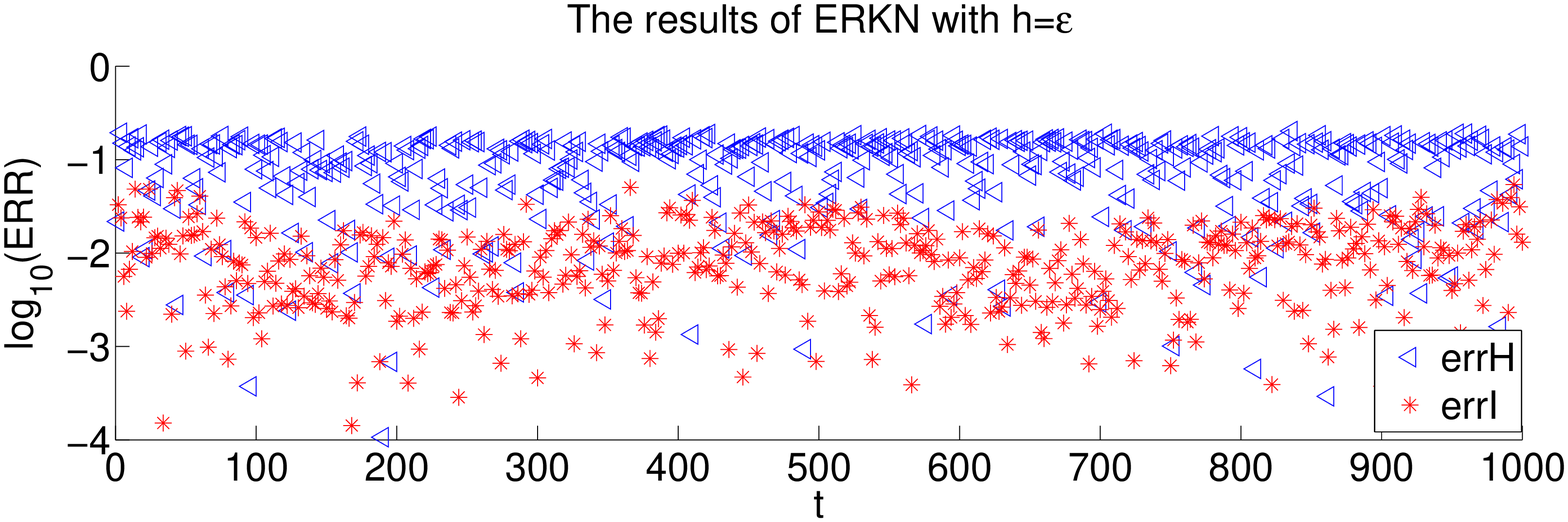}\\
\includegraphics[width=12cm,height=2.5cm]{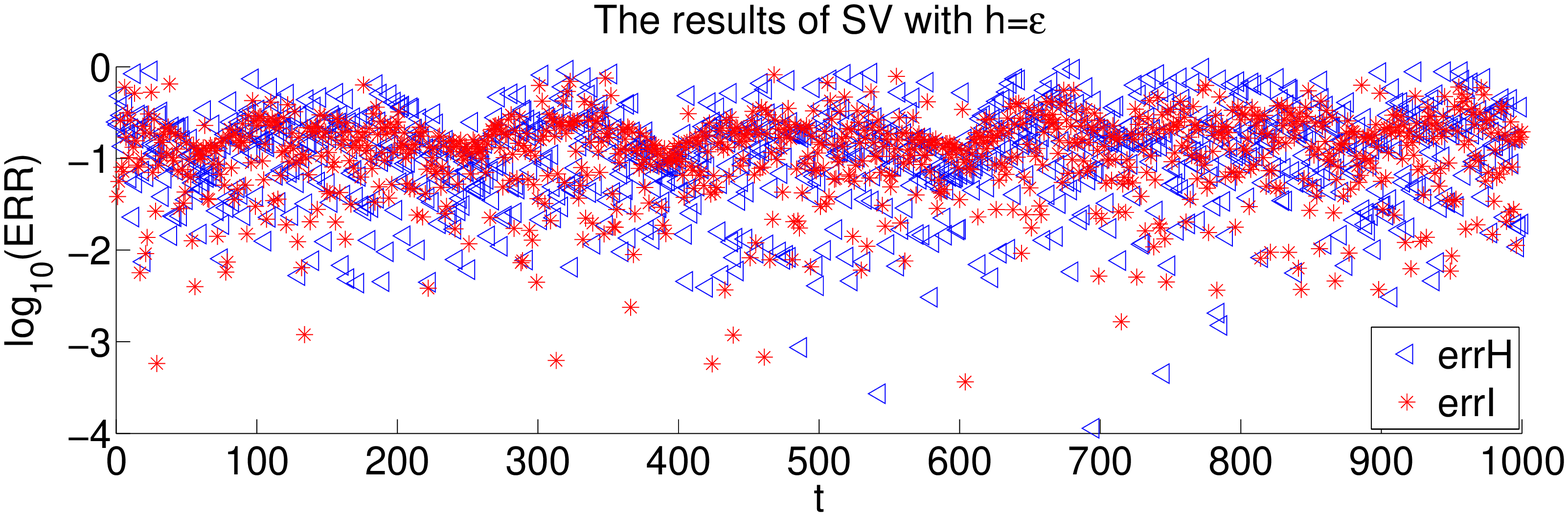}\\
\includegraphics[width=12cm,height=2.5cm]{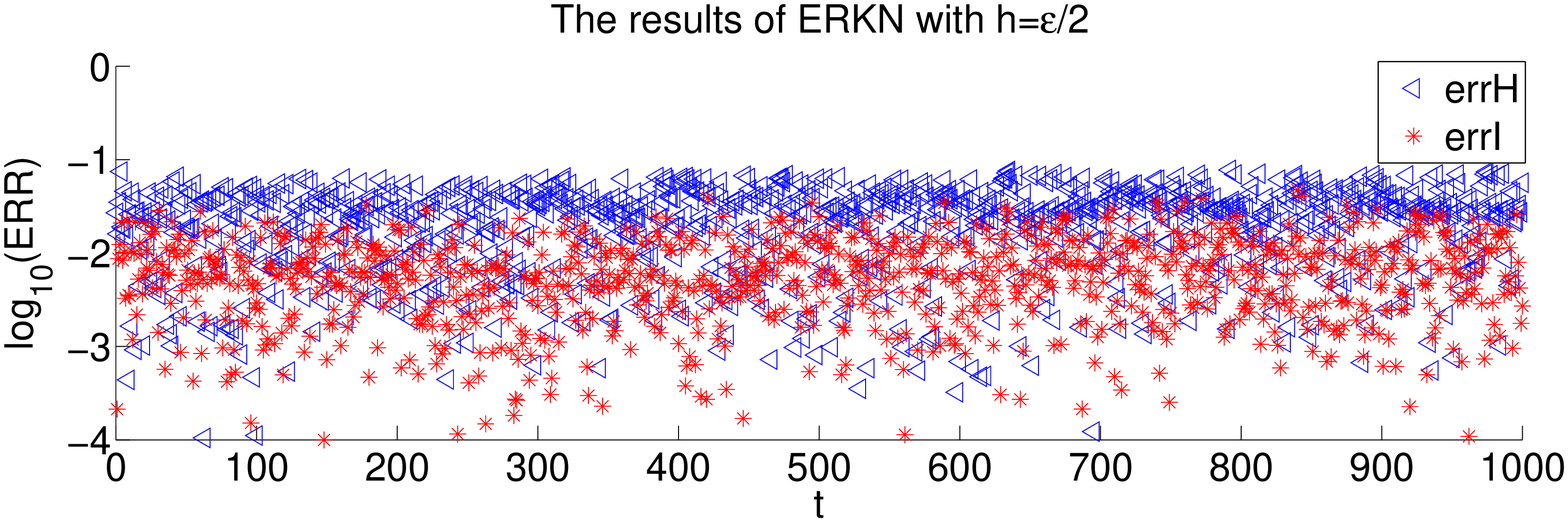}\\
\includegraphics[width=12cm,height=2.5cm]{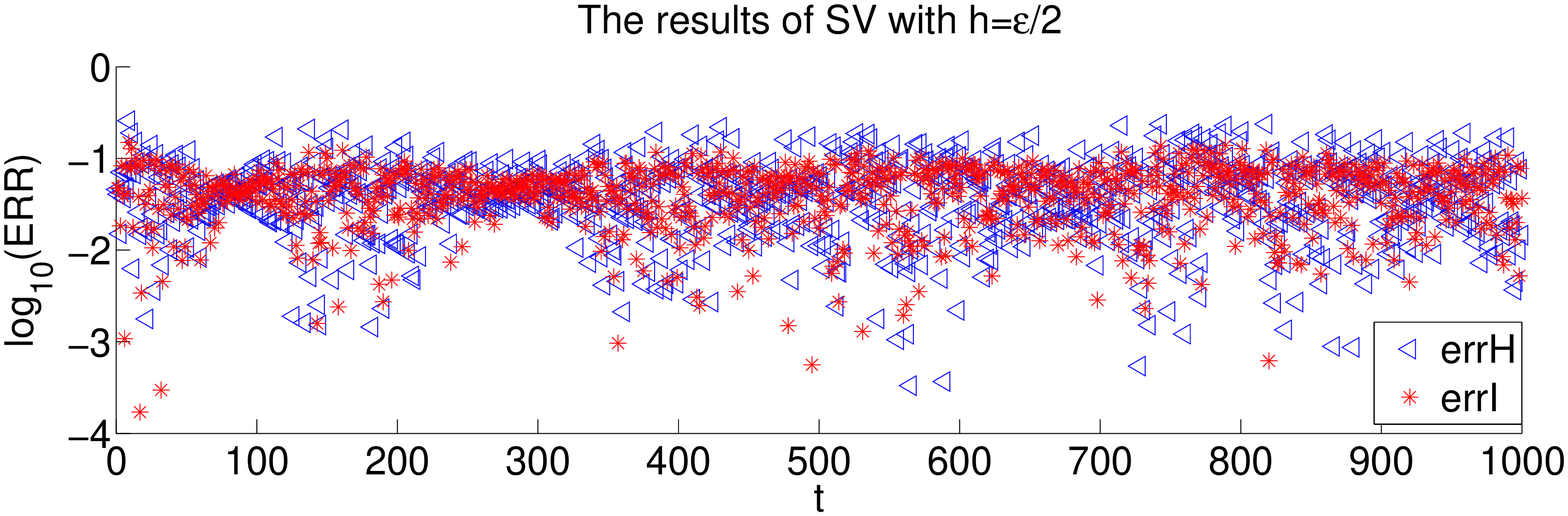}\\
\includegraphics[width=12cm,height=2.5cm]{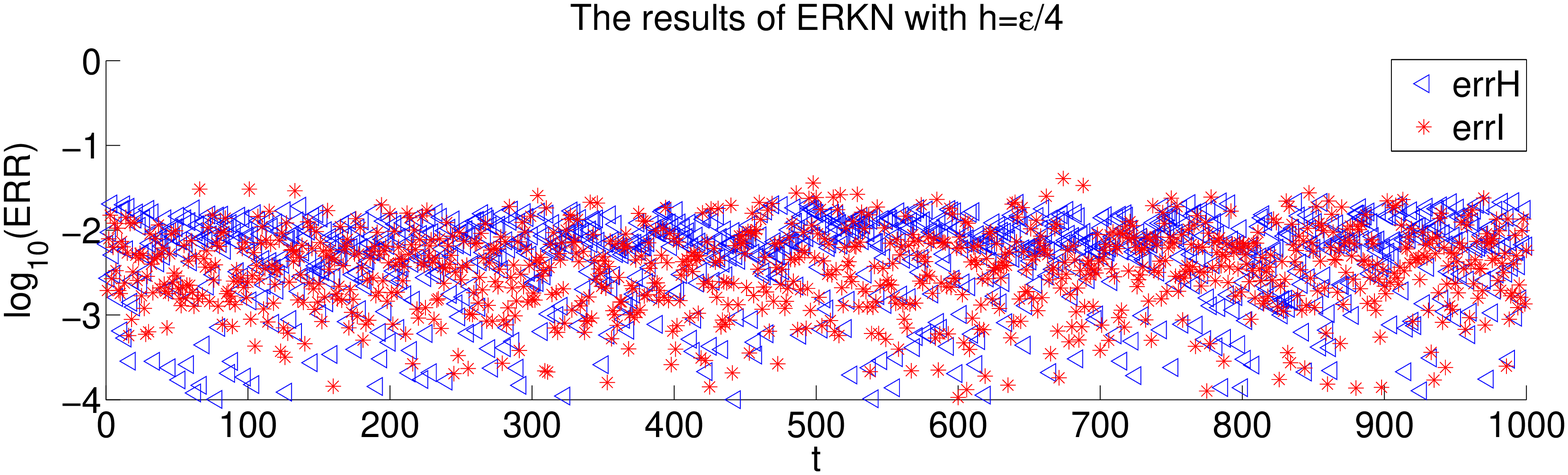}\\
\includegraphics[width=12cm,height=2.5cm]{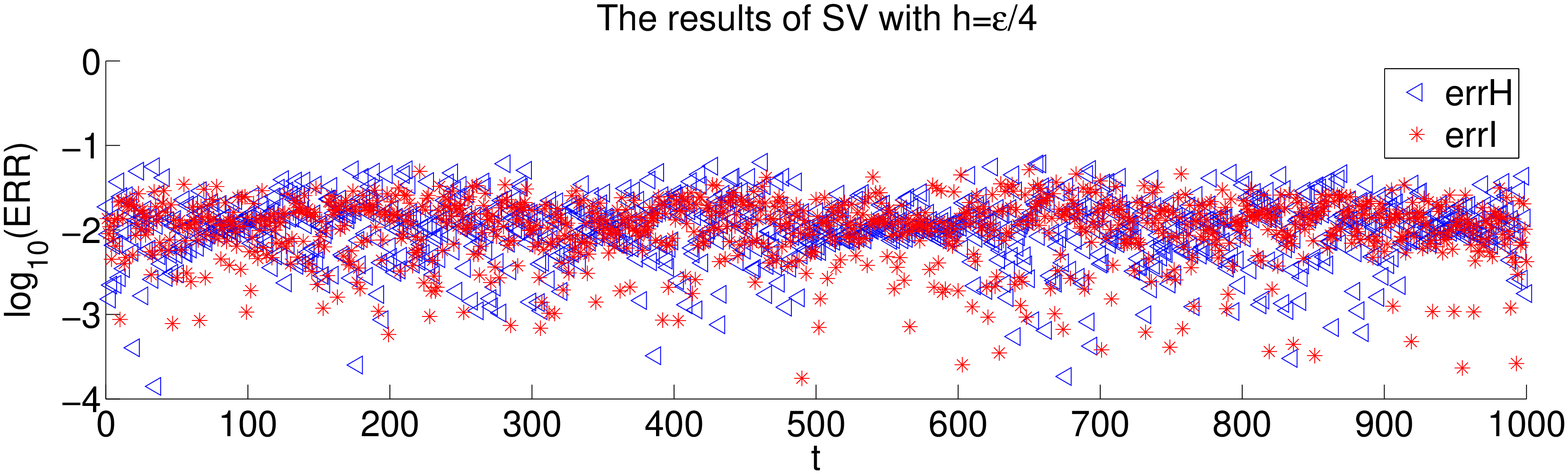}
% figure caption is below the figure
\caption{The logarithm of the  errors   of  $H_h$ and   $I_h$ against $t$.}%
\label{fig0}%
\end{figure}

 We solve the system on the interval $[0,1000]$ with $h=\epsilon,
\epsilon/2$ and $\epsilon/4$.   The errors of the modified action
$I_h$ and modified energy $H_h$ against $t$ for the ERKN integrator
and for the SV method are shown in Fig. \ref{fig0}, respectively. It
can be observed from Fig. \ref{fig0} that these two methods have
good long-time behaviour. Moreover, in order to compare the
performance of these two methods in a clearer way, we present the
errors (logarithmic scale) of $H_h$ and $I_h$ in Tables \ref{ta
re1}-\ref{ta re2} for $h=\epsilon, \epsilon/2$ and $\epsilon/4$. It
can be seen from Tables \ref{ta re1}-\ref{ta re2} that  the ERKN
integrator behaves better than the SV method.

\renewcommand\arraystretch{1.2}
\begin{table}[!htb]$$
\begin{array}{|c|c c|c c|c c|}
\hline%%%%%%%%%%%%%%%%%%%%%%%%%%%%%%%%%%%
  &h=\epsilon   &   &h=\frac{\epsilon}{2}   & &h=\frac{\epsilon}{4}   &\\
t &ERKN   &SV  &ERKN    &SV  &ERKN   &SV  \\
\hline%%%%%%%%%%%%%%%%%%%%%%%%%%%%%%%%%%%
%%%%%%%%%%%%%%%%%%%%%%%%%%%%%%%%%%%%%%%%%%%%%%%%%%%%%%%%%%%%%%%%
100   &-1.8344 & -0.6452  &-2.5332  & -1.4777 &-2.6127 &-1.8497    \cr%%%%%%%%%%%%
200  & -2.7308 & -0.6029  &-2.3266 & -1.6145 & -1.8030 &-1.8926  \cr%%%%%%%%%%%%
300  & -3.3372 & -1.4396  &-2.5646 & -1.3632 & -2.8197 &-1.7694 \cr%%%%%%%%%%%%
400  & -2.0022 & -0.8373  &-2.9703 & -1.1985&  -2.0974 &-1.7005 \cr%%%%%%%%%%%%
500  & -1.5597 &-0.8239   &-1.6116 &-1.7591 &  -1.6143 & -1.6400 \cr%%%%%%%%%%%%
600  & -2.0526 & -1.4494  &-2.2234 & -1.0829& -3.9740  &-2.7206\cr%%%%%%%%%%%%
700  & -2.6053 & -0.5843  &-2.1349 & -1.4267 & -3.4172 &-1.4197 \cr%%%%%%%%%%%%
800  & -1.6701 &-0.5793   &-1.9155 &-1.0739 & -2.2083  &-1.9014\cr%%%%%%%%%%%%
900  & -3.4265 & -1.2501  &-2.3435 & -1.4888& -2.6601  &-2.2565\cr%%%%%%%%%%%%
 \hline
\end{array}
$$
\caption{The logarithm of the  errors   of   $I_h$  at different $t$
for different $h$.} \label{ta re1}
\end{table}

\renewcommand\arraystretch{1.2}
\begin{table}[!htb]$$
\begin{array}{|c|c c|c c|c c|}
\hline%%%%%%%%%%%%%%%%%%%%%%%%%%%%%%%%%%%
  &h=\epsilon   &   &h=\frac{\epsilon}{2}   & &h=\frac{\epsilon}{4}   &\\
t &ERKN   &SV  &ERKN    &SV  &ERKN   &SV  \\
\hline%%%%%%%%%%%%%%%%%%%%%%%%%%%%%%%%%%%
%%%%%%%%%%%%%%%%%%%%%%%%%%%%%%%%%%%%%%%%%%%%%%%%%%%%%%%%%%%%%%%%
100   &-1.1034  & -1.4116 &  -3.9534  & -1.5614  & -2.0679 &-1.8594
\cr 200   &-1.0398  & -1.0472 &  -2.0653 &  -1.9351  & -3.3222  &
-2.1970 \cr 300    &-1.3770  & -0.7537  & -1.4473 &  -1.3941 &
-2.0671 & -1.8029 \cr
  400 &-0.8716  & -1.2544 &  -1.6365  & -1.1100 &  -2.0744  & -1.6724 \cr
  500& -0.9011  & -1.2598  & -2.2577  & -1.9558  & -1.7846  & -2.2547  \cr
  600 &-0.8535  & -1.7726 &  -3.1624  & -0.9785 &  -1.9745  & -1.9599 \cr
   700&-2.5285  & -0.7339  & -1.7042 &  -1.7082  & -2.1943  & -2.2916 \cr
  800& -0.7612  & -0.2928 &  -1.6261  & -0.8134  & -3.8582 &  -1.8712 \cr
  900 &-1.3052  & -0.8408  & -1.5555 &  -1.6918  & -2.2099  & -1.7959
 \cr
 \hline
\end{array}
$$
\caption{The logarithm of the  errors   of   $H_h$  at different $t$
for different $h$.} \label{ta re2}
\end{table}

\section{Modulated Fourier expansion for the integrators} \label{sec:Analysis of the methods}

In  what follows,  we derive and analyse  the varying-frequency
modulated Fourier expansion for ERKN integrators.

\begin{prop}\label{energy thm}
Under the conditions of Theorem \ref{pre mod I},  the numerical
solution determined by \eqref{erknSchems} admits the following
varying-frequency modulated Fourier expansion
\begin{equation}
\begin{aligned} &q^{n}= \sum\limits_{|k|\leq N+1} \mathrm{e}^{\mathrm{i}k\phi(t)/\epsilon}\zeta_h^k(t)+R_{h,N}(t),\\
&p^{n}= \sum\limits_{|k|\leq N+1} \mathrm{e}^{\mathrm{i}k\phi(t)/\epsilon}\eta_h^k(t)+S_{h,N}(t),\\
\end{aligned}
\label{MFE-ERKN}%
\end{equation}
where $\zeta_{h,2}^{N+1}(t)=\eta_{h,2}^{N+1}(t)=0,$ the phase
function $\phi(t)$ satisfies
%%%%%%%%%%%%%%%%%%%%%%%%%%%%%%%%%%%%%%%%%%%%%%%%%%%%%%%%%%%%%%%%%%%%%%%%
\footnote{{  Clearly, it follows from the assumption, i.e.
$\bar{b}_1(h\upsilon)\neq 0$ that $\textmd{sinc}(hv/2) \neq 0$. This
guarantees that the function $\phi(t)$ is not constant. }}
%%%%%%%%%%%%%%%%%%%%%%%%%%%%%%%%%%%%%%%%%%%%%%%%%%%%%%%%%%%%%%%%%%%%%%%%
\begin{equation}
 \sin(\frac{h\dot{\phi}(t)}{2\epsilon}
 )=\frac{h}{2\epsilon}
\mathrm{sinc}(h\upsilon/2) \omega(\zeta_{h,1}^{0}(t)),\ \ \
\phi(0)=0,
\label{phase func}%
\end{equation}
and the remainder terms are bounded by
\begin{equation}
 R_{h,N}(t)=\mathcal{O}(t\epsilon^{N}),\ \ \ \  S_{h,N}(t)=\mathcal{O}(t\epsilon^{N-1}).\\
\label{remainder}%
\end{equation}
The coefficient functions as well as   their derivatives up to
arbitrary order $M$ are bounded by
\begin{equation}
 \zeta^k_{h,1}=
\left\{\begin{aligned}
&\mathcal{O}(\epsilon^k)\quad \ \  \mathrm{when}\ k\  \mathrm{even},\\
&\mathcal{O}(\epsilon^{k+2})\ \ \mathrm{when}\ k\  \mathrm{odd},\\
\end{aligned}\right.\qquad \quad  \zeta^k_{h,2}=
\left\{\begin{aligned}
&\mathcal{O}(\epsilon^{k+2})\  \ \mathrm{when}\ k\  \mathrm{even},\\
&\mathcal{O}(\epsilon^{k})\quad\ \ \mathrm{when}\ k\  \mathrm{odd},\\
\end{aligned}\right.
\label{coefficient func1}%
\end{equation}
for $k=0,\ldots,N+1$  and
\begin{equation}
 \eta^k_{h,1}=
\left\{\begin{aligned}
&\mathcal{O}(\epsilon^{k-1})\quad \ \ \mathrm{when}\ k\  \mathrm{even},\\
&\mathcal{O}(\epsilon^{k+1})\quad \ \  \mathrm{when}\ k\  \mathrm{odd},\\
\end{aligned}\right.\qquad \quad  \eta^k_{h,2}=
\left\{\begin{aligned}
&\mathcal{O}(\epsilon^{k+1})\quad \mathrm{when}\ k\  \mathrm{even},\\
&\mathcal{O}(\epsilon^{k-1})\quad \mathrm{when}\ k\  \mathrm{odd},\\
\end{aligned}\right.
\label{coefficient func2}%
\end{equation} for $k=1,\ldots,N+1$,   in addition to
\begin{equation}\eta^0_{h,1}=\mathcal{O}(1),\quad \ \ \eta^0_{h,2}=\mathcal{O}(\epsilon).\label{eta21}%
\end{equation}
 Moreover, we have
$\zeta_h^{-k}=\overline{\zeta_h^{k}}$ and
$\eta_h^{-k}=\overline{\eta_h^{k}}$ for all $k$. With these bounds,
the functions $\zeta_h^k$ and $\eta_h^k$ are unique up to the terms
with order of magnitude $\mathcal{O}(\epsilon^{N+2})$. The constants
symbolized by $\mathcal{O} $   are independent of $h, \epsilon $ and
$t$ with $0\leq t \leq T$, but depend on the constants $N, T$, the
derivatives of $\omega(q_1)$ and $U$ on $K$, and the maximum order
$M$ of considered derivatives of coefficient functions.
\end{prop}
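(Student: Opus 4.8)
\noindent\emph{Sketch of proof.}\quad
The plan is to construct the expansion \eqref{MFE-ERKN} formally, by substituting the ansatz into the one-step map \eqref{erknSchems}, separating the result according to the frequencies $\mathrm{e}^{\mathrm{i}k\phi(t)/\epsilon}$, solving the resulting system of modulation equations iteratively in ascending powers of $\epsilon$, and finally estimating the remainder through a stability argument for \eqref{erknSchems}. Throughout, the derivation follows the varying-frequency modulated Fourier expansion of \cite{Hairer16} for the St\"ormer--Verlet method; the only substantial modifications are those forced by the matrix-valued coefficients $\cos(h\Omega)$, $\mathrm{sinc}(h\Omega)$, $b_1(h\Omega)$, $\bar b_1(h\Omega)$ of the ERKN integrator.

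First I would exploit the block structure $\Omega=\mathrm{diag}(0,\upsilon I)$: on the slow block ($q_1$-component) the scheme reduces to an ordinary one-stage Nystr\"om step, while on the fast block ($q_2$-component) it acts through the scalars $\cos(h\upsilon)$, $\mathrm{sinc}(h\upsilon)$, $b_1(h\upsilon)$, $\bar b_1(h\upsilon)$. Writing $t=nh$, treating $\zeta_h^k,\eta_h^k$ as smooth functions of $t$, using $\mathrm{e}^{\mathrm{i}k\phi(t+h)/\epsilon}=\mathrm{e}^{\mathrm{i}k\phi(t)/\epsilon}\,\mathrm{e}^{\mathrm{i}kh\dot\phi(t)/\epsilon}(1+\mathcal{O}(\epsilon))$ (valid since $h=\mathcal{O}(\epsilon)$), and Taylor-expanding $\zeta_h^k(t\pm h)$ and the nonlinearity $g$ about $\zeta_h^0(t)$, I collect the coefficient of each $\mathrm{e}^{\mathrm{i}k\phi(t)/\epsilon}$. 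For the fast block the homogeneous part of these relations is governed, for index $k$, by the symbol $z^2-2\cos(h\upsilon)z+1$ at $z=\mathrm{e}^{\mathrm{i}kh\dot\phi/\epsilon}$, corrected by the linear-in-$q_2$ part of $g$, which carries the position-dependent factor $(\omega^2(\zeta_{h,1}^0)-\omega_0^2)/\epsilon^2$. The phase function $\phi$ is then fixed by requiring that the leading-order $k=\pm1$ equations for $(\zeta_{h,2}^{\pm1},\eta_{h,2}^{\pm1})$ admit a nontrivial (amplitude) solution; carrying out this computation with $c_1=1/2$ and $\bar b_1(h\upsilon)=\tfrac12\mathrm{sinc}^2(h\upsilon/2)$, whence $b_1$ is determined by \eqref{sym cond}, produces exactly the dispersion relation \eqref{phase func}, and the standing assumption $\bar b_1(h\upsilon)\neq0$, i.e. $\mathrm{sinc}(h\upsilon/2)\neq0$, guarantees $\dot\phi\not\equiv0$.

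Next comes the iterative construction. The numerical condition \eqref{numerical cond}, through the bound $2\sin(\pi/(N+2))$ and the oddness of $N$, ensures that for $2\le|k|\le N+1$ the relevant operator stays invertible with $\epsilon$-uniform bounds, whereas for $k=\pm1$ the phase equation makes it singular, leaving a first-order (in $t$) differential equation for the amplitude $\zeta_{h,2}^{\pm1}$; the $k=0$ fast equation and all slow-block equations are likewise uniquely solvable. Solving these relations in ascending powers of $\epsilon$, starting from $\zeta_{h,1}^0$ equal to the slow solution to leading order and $\zeta_{h,2}^{\pm1}=\mathcal{O}(\epsilon)$, and propagating through the quadratic couplings in $g$ (which alternate the even/odd parity of the indices and supply each nonsingular solve with a power of $\epsilon$), one obtains by induction on the total order the size estimates \eqref{coefficient func1}--\eqref{eta21}. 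Differentiating the modulation equations gives the bounds on derivatives up to order $M$; here one must absorb the factors $\dot\phi/\epsilon=\mathcal{O}(\epsilon^{-1})$ produced by differentiation, which the chosen scaling of the coefficient functions does, and the stated uniqueness up to $\mathcal{O}(\epsilon^{N+2})$ follows by applying the same nonsingular-operator argument to the difference of two such expansions.

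Finally I would estimate the defect. Inserting the truncated sums $|k|\le N+1$ into \eqref{erknSchems} leaves a per-step residual of size $\mathcal{O}(\epsilon^{N+1})$, from the dropped $|k|=N+2$ contributions and the Taylor tails, so the difference between $(q^n,p^n)$ and the truncated expansion satisfies \eqref{erknSchems} with this small inhomogeneity and zero initial data. A discrete Gronwall (Lady Windermere's fan) argument, using that the linear propagator of \eqref{erknSchems} is bounded uniformly over $0\le nh\le T$, then accumulates the defect to $R_{h,N}(t)=\mathcal{O}(t\epsilon^{N})$ in $q$ and $S_{h,N}(t)=\mathcal{O}(t\epsilon^{N-1})$ in $p$, the extra power of $\epsilon^{-1}$ in the momentum stemming from the factor $\Omega$ relating the $p$-update to the position oscillation. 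I expect the main obstacle to be the combination of two features: the solution dependence of the frequency, which makes the modulation system nonlinearly coupled through $\omega(\zeta_{h,1}^0(t))$ so that the construction and all estimates must be closed simultaneously; and obtaining $\epsilon$-uniform bounds on high-order derivatives of the coefficient functions despite the $\epsilon^{-1}$ carried by the phase. Both are handled exactly as in \cite{Hairer16}; the genuinely new ingredients are the replacement of the exact linear flow of the St\"ormer--Verlet scheme by the ERKN propagator and the resulting appearance of $\mathrm{sinc}(h\upsilon/2)$ in the symbols and in \eqref{phase func}.
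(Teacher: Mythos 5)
Your proposal is correct and follows essentially the same route as the paper's proof (which itself closely tracks Hairer--Lubich): inserting the ansatz into the symmetric two-step reformulation of \eqref{erknSchems}, extracting the dispersion relation \eqref{phase func} from the dominant $k=\pm1$ balance in the fast component with the special choice of $\bar b_1(h\upsilon)$, using the stepsize restriction \eqref{numerical cond} to keep the operators for $|k|\ge 2$ invertible while $k=\pm1$ yields a first-order amplitude equation, and accumulating the per-step defect. The only cosmetic difference is that the paper explicitly introduces internal-stage coefficient functions $\xi_h^k(t\pm h/2)$ and the identity $q^{n+1}-q^{n}=\upsilon^{-1}\tan(h\upsilon/2)(p^{n+1}+p^{n})$ to link $\eta_h^k$ to $\zeta_h^k$, details you subsume under ``handled as for the positions.''
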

\begin{proof}We  need only to briefly highlight the main
differences in the construction of the coefficients functions since
some parts of the proof are similar to those of Theorem 4.1 in
\cite{Hairer16}. Hence, the similar derivations  of initial values
and defects will be omitted for brevity.

We will prove that there exist  two functions
\begin{equation}
\begin{aligned} &q_{h}(t)=\sum\limits_{|k|\leq N+1}q^k_{h}(t)= \sum\limits_{|k|\leq N+1}
\mathrm{e}^{\mathrm{i}k\phi(t)/\epsilon}\zeta_h^k(t),\\
 &\ p_{h}(t)=\sum\limits_{|k|\leq N+1}p^k_{h}(t)=\sum\limits_{|k|\leq N+1} \mathrm{e}^{\mathrm{i}k\phi(t)/\epsilon}\eta_h^k(t)
\end{aligned}
\label{MFE-1}%
\end{equation}
with smooth  coefficients $ \zeta_h^k$ and  $\eta_h^k$,  such that
\begin{equation*}
\begin{aligned} &q^{n}=q_{h}(t)+\mathcal{O}(\epsilon^{N+2}),\ \ p^{n}= p_{h}(t)+\mathcal{O}(\epsilon^{N+2})
\end{aligned}
\label{MFE-def}%
\end{equation*}
 for $t=nh$.
 \vskip2mm \textbf{I. Construction of the coefficients
functions.}

 $\bullet$  \textbf{1.1 Construction of the coefficients
functions for $Q^{n+\frac{1}{2}}$.}

  We consider the  following modulated Fourier expansion
\begin{equation}
\begin{aligned} &\tilde{q}_{h}(t+\frac{h}{2}):=\sum\limits_{|k|\leq N+1}
\mathrm{e}^{\mathrm{i}k \phi(t)/\epsilon}\xi_h^k(t+\frac{h}{2})
\end{aligned}
\label{MFE-2}%
\end{equation}
for $Q^{n+\frac{1}{2}}$ in the  first   term  of the ERKN integrator
\eqref{erknSchems}. Inserting  \eqref{MFE-1} and \eqref{MFE-2} into
 the first equation of  \eqref{erknSchems} and comparing the coefficients of $\mathrm{e}^{\mathrm{i}k
  \phi(t)/\epsilon}$ yields
\begin{equation*}
\begin{aligned}
&\xi^k_h(t+\frac{h}{2})=
\cos(\frac{1}{2}h\Omega)\zeta_h^k(t)+\frac{1}{2}h
\mathrm{sinc}(\frac{1}{2}h\Omega)\eta_h^k(t).
\end{aligned}
\label{MFE-3}%
\end{equation*}
Likewise, for  the modulated Fourier expansion of
$Q^{n-\frac{1}{2}}$, we obtain  the function
\begin{equation*}
\begin{aligned} &\tilde{q}_{h}(t-\frac{h}{2}):=\sum\limits_{|k|\leq N+1}
 \mathrm{e}^{\mathrm{i}k \phi(t)/\epsilon}\xi_h^k(t-\frac{h}{2})
\end{aligned}
\label{MFE-12}%
\end{equation*}
with
\begin{equation*}
\begin{aligned}
&\xi^k_h(t-\frac{h}{2})=
\cos(\frac{1}{2}h\Omega)\zeta_h^k(t)-\frac{1}{2}h
\mathrm{sinc}(\frac{1}{2}h\Omega)\eta_h^k(t).
\end{aligned}
\label{MFE-13}%
\end{equation*}

Under the conditions \eqref{coefficient func1} and
\eqref{coefficient func2}, we have
\begin{equation}
\xi^k_{h,1}(t\pm\frac{h}{2})= \left\{\begin{aligned}
&\mathcal{O}(\epsilon^k),\quad \ \ \mathrm{when}\ k\  \mathrm{even}\\
&\mathcal{O}(\epsilon^{k+2}),\ \ \mathrm{when}\ k\  \mathrm{odd}\\
\end{aligned}\right.\qquad    \xi^k_{h,2}(t\pm\frac{h}{2})=
\left\{\begin{aligned}
&\mathcal{O}(\epsilon^{k+2}),\  \ \mathrm{when}\ k\  \mathrm{even}\\
&\mathcal{O}(\epsilon^{k}),\quad\ \ \mathrm{when}\ k\  \mathrm{odd}\\
\end{aligned}\right.
\label{xi func1}%
\end{equation}
for $k=0,\ldots,N+1$, and $\xi_h^{-k}=\bar{\xi}_h^k.$

 $\bullet$   \textbf{1.2 Construction of the coefficients
functions for $q^{n}$.}

For the second term of the ERKN integrator    \eqref{erknSchems}, it
follows from its
 symmetry that
\begin{equation*}
\begin{aligned}&q_{h}(t+h)-
2\cos(h\Omega)q_{h}(t)+q_{h}(t-h)\\=& h^2\bar{b}_1(h\Omega)
\big[g(\tilde{q}_{h}(t+\frac{h}{2}))+g(\tilde{q}_{h}(t-\frac{h}{2}))\big].
\end{aligned}\label{MFE-q2-newt}%
\end{equation*}
Taking into account   the operator defined in \cite{hairer2006}
\begin{equation*}
\begin{aligned}\mathcal{L}(hD):&=e^{hD}-2\cos(h\Omega)+e^{-hD}=2\big(\cos(\mathrm{i}
hD)-\cos(h\Omega)\big)\\
&= 4\sin\big(\frac{1}{2}h\Omega+\frac{1}{2}\mathrm{i}
hD\big)\sin\big(\frac{1}{2}h\Omega-\frac{1}{2}\mathrm{i}
hD\big),\end{aligned}
\end{equation*}
the above formula can be expressed as
\begin{equation*}
\begin{aligned}&\mathcal{L}(hD)q_{h}(t)=\sum\limits_{|k|\leq
N+1}\mathcal{L}(hD)q^k_{h}(t)=
h^2\bar{b}_1(h\Omega)\big[g(\tilde{q}_{h}(t+\frac{h}{2}))+g(\tilde{q}_{h}(t-\frac{h}{2}))\big].
\end{aligned} %
\end{equation*}
 Then, from
$q^k_{h}(t+h)=\mathrm{e}^{\mathrm{i}k\phi(t+h)/\epsilon}\zeta_h^k(t+h),$
 we expand
$q^k_{h}(t+h)$   in a similar way used in \cite{Hairer16}
\begin{equation*}
\begin{aligned}&q^k_{h}(t+h)
=\big(\zeta_h^k(t)+h\dot{\zeta}_h^k(t)+\frac{h^2}{2}\ddot{\zeta}_h^k(t)+\cdots\big)
\mathrm{e}^{\mathrm{i}k\phi(t)/\epsilon}\mathrm{e}^{\mathrm{i}k\kappa\dot{\phi}(t)}
\big(1+ \mathrm{i}k\kappa\frac{h}{2}\ddot{\phi}(t)+\cdots\big),
\end{aligned}
\end{equation*}
where $\kappa=\frac{h}{\epsilon}$.  Therefore, inserting this into
the right hand size of the formula
$$\mathcal{L}(hD)q^k_{h}(t)=q^k_{h}(t+h)-2\cos(h\Omega)q^k_{h}(t)+q^k_{h}(t-h),$$ we obtain
\begin{equation*}
\begin{aligned}\mathcal{L}(hD)q^k_{h}(t)=&\Big(\mathcal{L}(\mathrm{i}k\kappa \dot{\phi}(t))\zeta_h^k(t)
+2\mathrm{i}h\sin(k\kappa\dot{\phi}(t))\dot{\zeta}_h^k(t)\\
&+\mathrm{i}k\kappa
h\cos(k\kappa\dot{\phi}(t))\ddot{\phi}(t)\zeta_h^k(t)+\cdots \Big)
\mathrm{e}^{\mathrm{i}k\phi(t)/\epsilon}
\end{aligned}
\end{equation*}
for $k\neq0.$ For $k=0,$ we have
$\mathcal{L}(hD)q^0_{h}(t)=\mathcal{L}(hD)\zeta^0_{h}(t).$ Comparing
the coefficients of $\mathrm{e}^{\mathrm{i}k\phi(t)/\epsilon}$
yields the  following formal equations:
\begin{equation}\label{zeta0 exp}
\begin{aligned}\epsilon^{-2}\mathcal{L}(hD)\zeta_h^0(t)
=&\kappa^2\bar{b}_1(h\Omega)\Big[g(\xi^0_h(t+\frac{h}{2}))+
\sum\limits_{s(\alpha)=0}\frac{1}{m!}g^{(m)}(\xi^0_h(t+\frac{h}{2}))(\xi_h(t+\frac{h}{2}))^{\alpha} \\
& + g(\xi^0_h(t-\frac{h}{2}))+
\sum\limits_{s(\alpha)=0}\frac{1}{m!}g^{(m)}(\xi^0_h(t-\frac{h}{2}))(\xi_h(t-\frac{h}{2}))^{\alpha}\Big]
\end{aligned} %
\end{equation}
and
\begin{equation}\label{zetak exp}
\begin{aligned}
&\epsilon^{-2}\Big(\mathcal{L}(\mathrm{i}k\kappa
\dot{\phi}(t))\zeta_h^k(t)
+2\mathrm{i}h\sin(k\kappa\dot{\phi}(t))\dot{\zeta}_h^k(t)+\mathrm{i}k\kappa
h\cos(k\kappa\dot{\phi}(t))\ddot{\phi}(t)\zeta_h^k(t)+\cdots\Big)\\
=&\kappa^2\bar{b}_1(h\Omega)\Big(
\sum\limits_{s(\alpha)=k}\frac{1}{m!}g^{(m)}(\xi^0_h(t+\frac{h}{2}))(\xi_h(t+\frac{h}{2}))^{\alpha}
+
\sum\limits_{s(\alpha)=k}\frac{1}{m!}g^{(m)}(\xi^0_h(t-\frac{h}{2}))(\xi_h(t-\frac{h}{2}))^{\alpha}\Big)
\end{aligned} %
\end{equation}
for $k\neq0.$ Here the sum ranges over
$\alpha=(\alpha_1,\ldots,\alpha_m)$ with   $0<|\alpha_i|\leq N+1$,
$s(\alpha)=\sum\limits_{j=1}^{m}\alpha_j,$ and $(\xi_h(t))^{\alpha}$
is an abbreviation for
$(\xi^{\alpha_1}_h(t),\ldots,\xi^{\alpha_m}_h(t))$.

\textit{\textbf{Proof of \eqref{phase func}.}} Under the conditions
\eqref{coefficient func1}, \eqref{coefficient func2} and \eqref{xi
func1} and with careful observations, we find the dominating terms
on both sides of \eqref{zetak exp} appearing for $|k|=1$ and for
$\zeta_{h,2}^{\pm1}(t)$. The corresponding relation is
\begin{equation}\label{fir phi}
\begin{aligned}
\mathcal{L}(\pm\mathrm{i}\kappa
\dot{\phi}(t))\zeta_{h,2}^{\pm1}(t)=2\kappa^2\bar{b}_1(h\upsilon)\big(\omega_0^2-\omega^2(\zeta_{h,1}^{0}(t))\big)\zeta_{h,2}^{\pm1}(t).
\end{aligned} %
\end{equation}
This means that
\begin{equation}\label{rea phi}
\begin{aligned}
 \cos(\mp  \kappa \dot{\phi}(t))
%4\sin\big(\frac{1}{2}h\upsilon\mp\frac{1}{2} \kappa
%\dot{\phi}\big)\sin\big(\frac{1}{2}h\upsilon\pm\frac{1}{2} \kappa
%\dot{\phi}\big)
=\cos(h\upsilon)+ \bar{b}_1(h\upsilon)
h^2\upsilon^2-\kappa^2\bar{b}_1(h\upsilon)
\omega^2(\zeta_{h,1}^{0}(t)),
\end{aligned} %
\end{equation}
 in which we have utilised
$\xi_{h,1}^{0}(t)=\zeta_{h,1}^{0}(t)+\mathcal{O}(h).$
%If $\bar{b}_1(h\upsilon)$ is chosen as
%$$\bar{b}_1(h\upsilon)=\frac{1}{2}
%\mathrm{sinc}^2(h\upsilon/2)=\frac{1-\cos(h\upsilon)}{h^2\upsilon^2},$$
With the special choice of $\bar{b}_1(h\upsilon)$ given in this
theorem, \eqref{rea phi} becomes
\begin{equation}\label{rea phi-1}
\begin{aligned}
 1-\cos( \kappa \dot{\phi}(t))
%4\sin\big(\frac{1}{2}h\upsilon\mp\frac{1}{2} \kappa
%\dot{\phi}\big)\sin\big(\frac{1}{2}h\upsilon\pm\frac{1}{2} \kappa
%\dot{\phi}\big)
= 2\sin^2(\kappa \dot{\phi}(t)/2)=\kappa^2\frac{1}{2}
\mathrm{sinc}^2(h\upsilon/2) \omega^2(\zeta_{h,1}^{0}(t)),
\end{aligned} %
\end{equation}
which proves \eqref{phase func}.

\textit{\textbf{Relation to $ \zeta^0_{h}(t)$.}} According to
\eqref{zeta0 exp}, we obtain  a relation to $ \zeta^0_{h}(t)$ as
follows:
\begin{equation*}
\begin{aligned}  \ddot{\zeta}_{h,1}^0(t)
=&\bar{b}_1(0)\Big[-\frac{2\omega(\xi^0_{h,1}(t+\frac{h}{2}))
|\xi^1_{h,2}(t+\frac{h}{2})|^2}{\epsilon^2}\nabla_{q_1}\omega(\xi^0_{h,1}(t+\frac{h}{2}))
\\
&-\frac{2\omega(\xi^0_{h,1}(t-\frac{h}{2}))
|\xi^1_{h,2}(t-\frac{h}{2})|^2}{\epsilon^2}\nabla_{q_1}\omega(\xi^0_{h,1}(t-\frac{h}{2}))\\
&-\nabla_{q_1}U(\xi^0_{h}(t+\frac{h}{2}))-\nabla_{q_1}U(\xi^0_{h}(t-\frac{h}{2}))
+\mathcal{O}(\epsilon)\Big],\\
\zeta^0_{h,2}(t)=&\frac{\epsilon^{2}\kappa^2\bar{b}_1(h\nu)}{4\sin^2(h\nu/2)}\Big[-\frac{\omega^2(\xi^0_{h,1}(t+\frac{h}{2}))
-\omega_0^2}{\epsilon^2}\xi^0_{h,2}(t+\frac{h}{2})-\nabla_{q_2}U(\xi^0_{h}(t+\frac{h}{2}))
\\
&-\frac{\omega^2(\xi^0_{h,1}(t-\frac{h}{2}))
-\omega_0^2}{\epsilon^2}\xi^0_{h,2}(t-\frac{h}{2})-\nabla_{q_2}U(\xi^0_{h}(t-\frac{h}{2}))
+\mathcal{O}(\epsilon)\Big].
\end{aligned} %
\end{equation*}

\textit{\textbf{Relation to $ \zeta^{\pm1}_{h}(t)$.}} It follows
from \eqref{zetak exp} that
\begin{equation*}
\begin{aligned}  4 \sin^2(\kappa
 \dot{\phi}(t)/2) \zeta_{h,1}^{\pm1}(t)
=&h^2\bar{b}_1 (0)\Big(
\sum\limits_{s(\alpha)=\pm1}\frac{1}{m!}g^{(m)}(\xi^0_h(t+\frac{h}{2}))(\xi_h(t+\frac{h}{2}))^{\alpha}
\\&+
\sum\limits_{s(\alpha)=\pm1}\frac{1}{m!}g^{(m)}(\xi^0_h(t-\frac{h}{2}))(\xi_h(t-\frac{h}{2}))^{\alpha}\Big),
\end{aligned} %
\end{equation*}
which gives the relation to  $\zeta_{h,1}^{\pm1}(t)$. For
$\zeta_{h,2}^{\pm1},$ we have discussed the $\epsilon^{-1}$-terms in
\eqref{zetak exp}. For the $\epsilon^{0}$-terms, we obtain
 the following relation
\begin{equation}\label{zeteh2}
\begin{aligned}   2\mathrm{i}h\sin(\kappa\dot{\phi}(t))\dot{\zeta}_{h,2}^{\pm1}(t)+\mathrm{i}\kappa
h\cos(\kappa\dot{\phi}(t))\ddot{\phi}(t)\zeta_{h,2}^{\pm1}(t)
=&h^2\bar{b}_1(h\upsilon) \mathcal{O}(\epsilon).
\end{aligned} %
\end{equation}
%which yields
%\begin{equation*}
%\begin{aligned} \frac{ \dot{\zeta}_{h,2}^{\pm1}(t)}{\epsilon}=-\frac{1}{2}\frac{ \cos(\kappa\dot{\phi}(t))}{\sin(\kappa\dot{\phi}(t))}
%\ddot{\phi}(t)+ \mathcal{O}(\epsilon).\\
%\end{aligned} %
%\end{equation*}
With the formula \eqref{phase func}, it can be deduced that
\begin{equation*}\begin{aligned}
&\sin(\kappa\dot{\phi}(t))=2\sin(\kappa\dot{\phi}(t)/2)\cos(\kappa\dot{\phi}(t)/2)\\
&\qquad\qquad\ =\kappa \mathrm{sinc}(h\upsilon/2)
\omega(\zeta_{h,1}^{0}(t))\sqrt{1-\frac{\kappa^2}{4}
\mathrm{sinc}^2(h\upsilon/2) \omega^2(\zeta_{h,1}^{0}(t))},\\
&\cos(\kappa\dot{\phi}(t))=1-2\sin^2(\kappa\dot{\phi}(t)/2)=1-\frac{\kappa^2}{2}
\mathrm{sinc}^2(h\upsilon/2) \omega^2(\zeta_{h,1}^{0}(t)),\\
&\ddot{\phi}(t)=(\frac{2}{\kappa}\arcsin\big(\frac{\kappa}{2}
\mathrm{sinc}(h\upsilon/2)
\omega(\zeta_{h,1}^{0}(t))\big)'=\frac{2}{\kappa}\frac{\frac{\kappa}{2}
\mathrm{sinc}(h\upsilon/2) \nabla
_{q_1}\omega(\zeta_{h,1}^{0}(t))\dot{\zeta}_{h,1}^{0}(t)}{\sqrt{1-\frac{\kappa^2}{4}
\mathrm{sinc}^2(h\upsilon/2) \omega^2(\zeta_{h,1}^{0}(t))}}.
\end{aligned} \end{equation*}
Inserting all of these expressions  into \eqref{zeteh2} leads to
\begin{equation*}\label{zeteh21}
\begin{aligned}    \frac{\dot{\zeta}_{h,2}^{\pm1}(t)}{\epsilon}=-\frac{\nabla
_{q_1}\omega(\zeta_{h,1}^{0}(t))\dot{\zeta}_{h,1}^{0}(t)}{2
\omega(\zeta_{h,1}^{0}(t))}
\frac{\zeta_{h,2}^{\pm1}(t)}{\epsilon}\Big(\frac{1-\frac{\kappa^2}{2}
\mathrm{sinc}^2(h\upsilon/2)
\omega^2(\zeta_{h,1}^{0}(t))}{1-\frac{\kappa^2}{4}
\mathrm{sinc}^2(h\upsilon/2) \omega^2(\zeta_{h,1}^{0}(t))}\Big)+
\mathcal{O}(\epsilon).
\end{aligned} %
\end{equation*}

%\begin{equation}
% \sin(\frac{h\dot{\phi}(t)}{2\epsilon}
% )=\frac{h}{2\epsilon}
%\mathrm{sinc}(h\upsilon/2) \omega(\zeta_{h,1}^{0}(t)),\ \ \
%\phi(0)=0
%\end{equation}

\textit{\textbf{Relation to $ \zeta^{ k}_{h}(t)$.}} The relation to
$ \zeta^{k}_{h,1}(t)$ for $k>1$ is
\begin{equation*}
\begin{aligned}  4 \sin^2(k \kappa
 \dot{\phi}/2) \zeta_{h,1}^{k}(t)
=&h^2\bar{b}_1(0) \Big(
\sum\limits_{s(\alpha)=k}\frac{1}{m!}g_1^{(m)}(\xi^0_h(t+\frac{h}{2}))(\xi_h(t+\frac{h}{2}))^{\alpha}
\\&+
\sum\limits_{s(\alpha)=k}\frac{1}{m!}g_1^{(m)}(\xi^0_h(t-\frac{h}{2}))(\xi_h(t-\frac{h}{2}))^{\alpha}\Big).\\
\end{aligned} %
\end{equation*}
The negative coefficient of  $ \zeta^{k}_{h,2}(t)$ for $k>1$ in
\eqref{zetak exp} equals
\begin{equation*}
\begin{aligned}&\frac{2( \cos(k\kappa \dot{\phi})
-\cos(h\upsilon))}{h^2}-2\bar{b}_1(h\upsilon) \frac{
\omega^2(\zeta_{h,1}^{0}(t))-\omega_0^2}{\epsilon^2}\\
=&\frac{2( \cos(k\kappa \dot{\phi}) -\cos(h\upsilon))}{h^2}-
\frac{2( \cos(\kappa \dot{\phi}) -\cos(h\upsilon))}{h^2}\\
=&\frac{2}{h^2}( \cos(k\kappa \dot{\phi}) -\cos(\kappa \dot{\phi}))
=-\frac{4}{h^2} \sin(\frac{k-1}{2}\kappa \dot{\phi})\sin(\frac{k+1}{2}\kappa \dot{\phi}).\\
\end{aligned} %
\end{equation*}
Thus, the relation  to  $ \zeta^{k}_{h,2}(t)$ for $k>1$ is
\begin{equation*}
\begin{aligned} & \frac{4}{h^2}  \sin(\frac{k-1}{2}\kappa \dot{\phi})\sin(\frac{k+1}{2}\kappa
\dot{\phi})
\zeta_{h,2}^{k}(t)\\
=&h^2\bar{b}_1(h\upsilon) \Big(
\sum\limits_{s(\alpha)=k}\frac{1}{m!}g_2^{(m)}(\xi^0_h(t+\frac{h}{2}))(\xi_h(t+\frac{h}{2}))^{\alpha}\\
&+
\sum\limits_{s(\alpha)=k}\frac{1}{m!}g_2^{(m)}(\xi^0_h(t-\frac{h}{2}))(\xi_h(t-\frac{h}{2}))^{\alpha}\Big).\\
\end{aligned} %
\end{equation*}
It follows from \eqref{numerical cond} and \eqref{phase func} that
\begin{equation*}
\begin{aligned}& \sin(\kappa\dot{\phi}(t)/2)=\frac{\kappa}{2}
\mathrm{sinc}(h\upsilon/2)
\omega(\zeta_{h,1}^{0}(t))\\
&=\frac{\kappa}{2} \mathrm{sinc}(h\upsilon/2)
\omega(q_{1}^{n})+\mathcal{O}(\epsilon)\leq\sin(\frac{\pi}{N+2}),\end{aligned} %
\end{equation*}
which leads to $\kappa\dot{\phi}(t)/2\leq
\frac{\pi}{N+2}+\mathcal{O}(\epsilon).$  This implies that the
factor of $\sin^2(k \kappa \dot{\phi}/2)$ is bounded away from $0$
for $|k|\leq N+1$, and the factor of $\sin(\frac{k-1}{2}\kappa
\dot{\phi})\sin(\frac{k+1}{2}\kappa \dot{\phi})$ is bounded away
from $0$ for $|k|\leq N.$

 $\bullet$   \textbf{1.3 Construction of the coefficients functions
for $p^{n}$.}

For the third term of   \eqref{erknSchems}, we get  a similar
relation
\begin{equation*}
\begin{aligned}&p_{h,2}(t+h)-2\cos(h\omega)p_{h,2}(t)+p_{h,2}(t-h)\\=&hb_1(h\omega)\big[g_2(\tilde{q}_{h}(t+\frac{h}{2}))-g_2(\tilde{q}_{h}(t-\frac{h}{2})\big].
\end{aligned}\label{MFE-q2-newtt}%
\end{equation*}
 As we have derived for   the second term,
we can  obtain the relations for  the modulated Fourier function of
$p^{n}$.   Moreover, by considering the second and third formulae of
\eqref{erknSchems} and
 the choice of $\bar{b}_1$ and $b_1$, we obtain
 \begin{equation*}
 q^{n+1}-q^{n} =\upsilon^{-1}\tan(h\upsilon/2) ( p^{n+1}+ p^n ),
\end{equation*}
which can be represented as
 \begin{equation}\label{rea pq}
 \mathcal{L}_1(hD) q^k_{h}(t) =\upsilon^{-1}\tan(h\upsilon/2) p^k_{h}(t)
\end{equation}
by defining
\begin{equation*}
\mathcal{L}_1(hD)=(\mathrm{e}^{hD}-1)(\mathrm{e}^{hD}+1)^{-1}.
\end{equation*}
Based on   this relationship between $q^k_{h}$ and $p^k_{h}$,  the
connection between $\zeta^k_{h}$ and $\eta^k_{h}$ can be obtained.
As an example,
 when $k=0$, one has
\begin{equation}\label{initial eta modula sys}
\eta^0_{h}(t)=\dot{\zeta}^0_{h}(t)+\mathcal{O}(h^2).
\end{equation}

% As we have derived for   the second term,
%we can  obtain the relations for  the modulated Fourier function of
%$p^{n}$.

The above relations   allow  us to construct the functions and
arrive at the bounds \eqref{coefficient func1} and
\eqref{coefficient func2} in the same way used in the proofs of
Theorems 2.1 and 4.1 in \cite{Hairer16}.

 \textbf{II. Initial values.}

 The initial
values for the differential equations of $\zeta_{h,1}^0,
\zeta_{h,2}^{\pm1}$ and $\eta_{h,1}^0, \eta_{h,2}^{\pm1}$ are
  computed from $q^0$ and $p^0$, which are
determined from the conditions that  \eqref{MFE-ERKN} is satisfied
without the remainder term for $t = 0$ and $t = h$.  We note that
the initial value $\dot{\zeta}_{h,1}^0(0)$ is obtained by
considering \eqref{initial eta modula sys} and
$p^0_{1}=\eta^0_{h,1}(0)+\mathcal{O}(\epsilon)=\dot{\zeta}^0_{h,1}(0)+\mathcal{O}(\epsilon).$

 \textbf{III. Defect.}

 The defect \eqref{remainder} can be obtained   by the same arguments
as in the constant-frequency case researched  in
\cite{Hairer00,hairer2006,17-new}.

 The proof is complete. \hfill    \end{proof}

\section{Action conservation of the integrator (proof of Theorem \ref{pre mod I})} \label{sec:Almost-invariants}
This section will prove the action conservation of the ERKN
integrator.

Similarly to \cite{Hairer16}, the modulated Fourier expansion is
considered  on an interval of length $O(h)$ and the coefficient
functions   and the phase function $\phi(t)$ are replaced by Taylor
polynomials of degree $M\geq N+3$.
 Let
$$\zeta=\big(\zeta^{-N+1}_h(t),\cdots,\zeta^{-1}_h(t),
\zeta^{0}_h(t),\zeta^{1}_h(t),\cdots,\zeta^{N-1}_h(t)\big),$$ and
$$\eta=\big(\eta^{-N+1}_h(t),\cdots,\eta^{-1}_h(t),
\eta^{0}_h(t),\eta^{1}_h(t),\cdots,\eta^{N-1}_h(t)\big). $$ We
obtain the following result about an invariant of
  the modulated Fourier expansion and its relationship with the
modified actions.

\begin{prop}\label{second invariant thm}
There exists a function $\mathcal{I}[\zeta,\eta]$ such that
\begin{equation*}
\begin{aligned}
&\mathcal{I}[\zeta,\eta](t)=\mathcal{I}[\zeta,\eta](0)+\mathcal{O}(t\epsilon^{N}) \ \ \textmd{for}\ 0\leq t\leq h,\\
&\mathcal{I}[\zeta,\eta](nh)=I_h(q_n,p_n)+\mathcal{O}(\epsilon)  \ \
\textmd{for}\ n=0,1,
\label{II}%
\end{aligned}
\end{equation*}
  where the constants  symbolized by
$\mathcal{O}$ are independent of $n, h$ and $\epsilon$. The notation
$\mathcal{I}[\zeta,\eta]$  depends on $\zeta, \dot{\zeta},\ldots,
\zeta^{(M)}$  and
 $\eta,  \dot{\eta},\ldots, \eta^{(M)}$.
\end{prop}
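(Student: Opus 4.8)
The plan is to carry out, in the varying-frequency setting, the Noether-type (``magic formula'') argument by which a modulated Fourier expansion produces an almost-invariant, exactly as in Sections~2--3 of \cite{Hairer16} and Chap.~XIII of \cite{hairer2006}. First I would record the modulation equations \eqref{zeta0 exp}--\eqref{zetak exp}, together with their counterparts for the $p$-component, on the short interval (with all coefficient functions and $\phi$ already replaced by Taylor polynomials of degree $M\ge N+3$), in the compact form
\begin{equation*}
\mathcal{A}_{-k}\bigl(\zeta_h^k,\dot{\zeta}_h^k,\ldots,(\zeta_h^k)^{(M)}\bigr)
= -\nabla_{-k}\,\mathcal{U}(\zeta) + \mathcal{O}(\epsilon^{N+2}),\qquad |k|\le N-1,
\end{equation*}
where $\mathcal{A}_{-k}$ denotes the linear differential operator on the left-hand sides of \eqref{zeta0 exp}--\eqref{zetak exp}, $\nabla_{-k}$ the gradient in the $(-k)$-th component, and $\mathcal{U}$ the \emph{extended potential}: the formal sum over index vectors $\alpha$ with $s(\alpha)=0$ of the derivatives of $W=\tfrac{\omega^2-\omega_0^2}{2\epsilon^2}|q_2|^2+U$ (weighted by $\bar b_1(h\Omega)$) evaluated at the half-step modulation functions $\tilde q_h(t\pm\tfrac h2)$ of \eqref{MFE-2}. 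Because $g=-\nabla W$ is a gradient, $\mathcal{A}_{-k}$ derives from a quadratic form and $\mathcal{U}$ is a genuine potential, so the truncated system is variational; the $p$-modulation functions $\eta_h^k$ are coupled to the $\zeta_h^k$ through the relations \eqref{rea pq} and \eqref{initial eta modula sys}.

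The invariant then comes from the one-parameter group $S_\theta:\ \zeta_h^k\mapsto \mathrm e^{\mathrm i k\theta}\zeta_h^k$. Since $S_\theta$ multiplies the $\alpha$-term of $\mathcal{U}$ by $\mathrm e^{\mathrm i s(\alpha)\theta}$ and only $s(\alpha)=0$ survives, the action functional underlying the truncated modulation system is $S_\theta$-invariant, although the original potential $W$ is not. By Noether's theorem applied to this finite-dimensional variational problem, there is a function $\mathcal{I}[\zeta,\eta]$, polynomial in $\zeta,\dot\zeta,\ldots,\zeta^{(M)}$ and (via \eqref{rea pq}, \eqref{initial eta modula sys}) in $\eta,\dot\eta,\ldots,\eta^{(M)}$, that is exactly conserved along solutions of the truncated system; inserting the defect of the genuine numerical modulation functions, whose size is controlled as in \eqref{remainder}, gives $\mathcal{I}[\zeta,\eta](t)=\mathcal{I}[\zeta,\eta](0)+\mathcal{O}(t\epsilon^N)$ on $[0,h]$. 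The variable phase $\phi(t)$ and the half-step shifts $t\pm\tfrac h2$ are handled precisely as in \cite{Hairer16}, by Taylor expansion in $h$ and the identities for $\sin(\kappa\dot\phi)$, $\cos(\kappa\dot\phi)$ and $\ddot\phi$ derived around \eqref{rea phi-1}.

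To identify $\mathcal{I}[\zeta,\eta](nh)$ with $I_h(q_n,p_n)$ at $n=0,1$, I would substitute the truncated expansions \eqref{MFE-ERKN} into both sides and use the size estimates \eqref{coefficient func1}, \eqref{coefficient func2}, \eqref{eta21}: all contributions with $|k|\ge2$, and the $\zeta_{h,1}$- and $\eta_{h,1}$-parts of the oscillatory terms, are $\mathcal{O}(\epsilon)$, so only the $k=\pm1$ components of $\zeta_{h,2},\eta_{h,2}$ (together with the slow $k=0$ part, which merely reassembles $q_n,p_n$) survive in $\mathcal{I}$. The quadratic leading part of $\mathcal{I}$ produced by the construction is of the form $a(h\upsilon,q_1)\,|\eta_{h,2}^{1}|^2+b(h\upsilon,q_1)\,\epsilon^{-2}|\zeta_{h,2}^{1}|^2$, where $a$ and $b$ collect the $\sin(\kappa\dot\phi)$-, $\cos(\kappa\dot\phi)$- and $\bar b_1,b_1$-factors coming out of $\mathcal{A}_{-1}$; rewriting these by means of \eqref{phase func}, \eqref{rea phi-1} and the $q$--$p$ relations in terms of the modified frequency $\omega_h(q_1)$ and of $\Psi(h\upsilon,q_1)$ brings this expression into exactly the form \eqref{modi act}, up to $\mathcal{O}(\epsilon)$. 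For the two special indices $n=0,1$ one additionally invokes the matching of initial values carried out in part~II of the proof of Proposition~\ref{energy thm}, which expresses $\zeta_{h,1}^0(0),\zeta_{h,2}^{\pm1}(0),\dot\zeta_{h,1}^0(0),\ldots$ in terms of $q^0,p^0$ and so absorbs the remaining $\mathcal{O}(\epsilon)$ discrepancy between the truncated expansion and $(q_n,p_n)$.

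The step I expect to be the real obstacle is not the symmetry/Noether mechanism, which is structural and borrowed directly from \cite{Hairer16}, but the bookkeeping behind the last paragraph: one must check that the $S_\theta$-invariance of the action functional is not spoiled by the half-step evaluation at $\tilde q_h(t\pm\tfrac h2)$ (so that Noether's theorem still applies to the relevant order), and then collect the numerous $\mathrm{sinc}(h\upsilon/2)$, $\cos(h\upsilon/2)$, $\bar b_1(h\upsilon)$ and $b_1(h\upsilon)$ factors emitted by the linearised operator $\mathcal{A}_{-1}$ so that the leading coefficients of $\mathcal{I}$ reproduce the somewhat intricate closed form \eqref{modi act} --- in particular recovering $\omega_h(q_1)$ from the factor $\sqrt{1-\tfrac{\kappa^2}{4}\mathrm{sinc}^2(h\upsilon/2)\,\omega^2(q_1)}$ and $\Psi(h\upsilon,q_1)$ from the combination of the $\cos(\tfrac12 h\upsilon)\bar b_1(h\upsilon)^{-1}$ and $\tfrac12 h^2\upsilon^2\mathrm{sinc}(\tfrac12 h\upsilon)\,b_1(h\upsilon)^{-1}$ contributions.
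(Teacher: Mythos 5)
Your proposal follows essentially the same route as the paper: the paper's proof is precisely the Noether-type argument you describe, differentiating the invariance of the extended potentials $\mathcal{U},\mathcal{V}$ under $\zeta_h^k\mapsto \mathrm e^{\mathrm ik\lambda/\epsilon}\zeta_h^k$, substituting the $q$- and $p$-modulation equations (weighted by $\cos(\tfrac12 h\Omega)(\bar b_1)^{-1}$ and $\tfrac12 h\,\mathrm{sinc}(\tfrac12 h\Omega)(b_1)^{-1}$ exactly as you anticipate), recognising a total derivative via the ``magic formulas'', and then extracting the dominant $|\zeta_{h,2}^{\pm1}|^2$, $|\eta_{h,2}^{\pm1}|^2$ terms and the relation \eqref{re ne1} to match \eqref{modi act}. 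The step you flag as the real obstacle --- the bookkeeping of the half-step evaluations and the trigonometric factors that assemble into $\omega_h$ and $\Psi$ --- is indeed where the bulk of the paper's computation lies, and it goes through as you outline.
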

\begin{proof} Following \cite{Hairer13} and without loss of
generality, the coefficient functions $\zeta^k_h(t), \eta^k_h(t)$
and the phase function $\phi(t)$ can be assumed to be   polynomials
of degree at most $M$. Therefore, $q^k_{h}(t)$ and  $p^k_{h}(t)$ are
entire analytic functions of $t$.

It then from the analysis presented in the proof of Proposition
\ref{energy thm} that
\begin{equation*}
\begin{aligned} &\tilde{q}_{h}(t+\frac{h}{2})=\cos(\frac{1}{2}h\Omega)q_{h}(t)+\frac{1}{2}h \mathrm{sinc}(\frac{1}{2}h\Omega)p_h(t),\\
&\tilde{q}_{h}(t-\frac{h}{2})=\cos(\frac{1}{2}h\Omega)q_{h}(t)-\frac{1}{2}h \mathrm{sinc}(\frac{1}{2}h\Omega)p_h(t),\\
& \mathcal{L}(hD) q_{h}(t)=h^2\bar{b}_1(h\Omega)\big(g(\tilde{q}_{h}(t+\frac{h}{2}))+g(\tilde{q}_{h}(t-\frac{h}{2}))\big)+\mathcal{O}(\epsilon^{N}),\\
& \mathcal{L}(hD)
p_{h}(t)=hb_1(h\Omega)\big(g(\tilde{q}_{h}(t+\frac{h}{2}))-g(\tilde{q}_{h}(t-\frac{h}{2}))\big)+\mathcal{O}(\epsilon^{N}),
\end{aligned}
\label{methods-inva}%
\end{equation*}
where  we have used the following denotations
\begin{equation*}
\begin{aligned}q_{h}(t)=\sum\limits_{ |k|\leq N+1}q^k_{h}(t),\  \ p_{h}(t)=\sum\limits_{
|k|\leq N+1}p^k_{h}(t),\ \
\tilde{q}_{h}(t\pm\frac{1}{2}h)=\sum\limits_{ |k|\leq
N+1}\tilde{q}^k_{h}(t\pm\frac{1}{2}h)
\end{aligned}
\end{equation*}
with
\begin{equation*}\label{rev-smo fun}
\begin{aligned}q^k_{h}(t)=\mathrm{e}^{\mathrm{i}k \phi(t)/\epsilon}\zeta_h^k(t),\ \ p^k_{h}(t)= \mathrm{e}^{\mathrm{i}k \phi(t)/\epsilon}\eta_h^k(t),\ \
\tilde{q}^k_{h}(t\pm\frac{1}{2}h)=\mathrm{e}^{\mathrm{i}k
\phi(t)/\epsilon}\xi_h^k(t\pm\frac{1}{2}h)
\end{aligned}
\end{equation*}
and
\begin{equation*}
\begin{aligned} \xi^k_h(t\pm\frac{h}{2})=
\cos(\frac{1}{2}h\Omega)\zeta_h^k(t)\pm\frac{1}{2}h
\mathrm{sinc}(\frac{1}{2}h\Omega)\eta_h^k(t) .
\end{aligned}
\end{equation*}
Rewriting the above equations in terms of $\tilde{q}^k_h,\ q_h^k,\
p_h^k$ yields
\begin{equation}
\begin{aligned} \tilde{q}^k_{h}(t+\frac{h}{2})=&\cos(\frac{1}{2}h\Omega)q^k_{h}(t)+\frac{1}{2}h \mathrm{sinc}(\frac{1}{2}h\Omega)p^k_h(t),\\
\tilde{q}^k_{h}(t-\frac{h}{2})=&\cos(\frac{1}{2}h\Omega)q^k_{h}(t)-\frac{1}{2}h \mathrm{sinc}(\frac{1}{2}h\Omega)p^k_h(t),\\
 \mathcal{L}(hD)
q^k_{h}(t)=&-h^2\bar{b}_1(h\Omega)\Big(\frac{1}{\epsilon^2}\nabla_{q^{-k}}\mathcal{V}(\tilde{q}
(t+\frac{h}{2}))+\nabla_{q^{-k}}\mathcal{U}(\tilde{q}
(t+\frac{h}{2}))
\\
&+\frac{1}{\epsilon^2}\nabla_{q^{-k}}\mathcal{V}(\tilde{q}
(t-\frac{h}{2}))+\nabla_{q^{-k}}\mathcal{U}(\tilde{q} (t-\frac{h}{2}))\Big)+\mathcal{O}(\epsilon^{N}),\\
 \mathcal{L}(hD)
p^k_{h}(t)=&-hb_1(h\Omega)\Big(\frac{1}{\epsilon^2}\nabla_{q^{-k}}\mathcal{V}(\tilde{q}
(t+\frac{h}{2}))+\nabla_{q^{-k}}\mathcal{U}(\tilde{q}
(t+\frac{h}{2}))
\\
&-\frac{1}{\epsilon^2}\nabla_{q^{-k}}\mathcal{V}(\tilde{q}
(t-\frac{h}{2}))-\nabla_{q^{-k}}\mathcal{U}(\tilde{q}
(t-\frac{h}{2}))\Big)+\mathcal{O}(\epsilon^{N}),
\end{aligned}
\label{methods-inva-nnew}%
\end{equation}
where $\mathcal{U}(\tilde{q})$ and $\mathcal{V}(\tilde{q})$ are
determined by
\begin{equation*}
\begin{aligned}
&\mathcal{U}(\tilde{q}(t\pm\frac{h}{2}))=U(\tilde{q}^0_h(t\pm\frac{h}{2}))+
\sum\limits_{m=2}^{N+1}
\sum\limits_{s(\alpha)=0}\frac{1}{m!}U^{(m)}(\tilde{q}^0_h(t\pm\frac{h}{2}))
(\tilde{q}_h(t\pm\frac{h}{2}))^{\alpha},\\
&\mathcal{V}(\tilde{q}(t\pm\frac{h}{2}))=V(\tilde{q}^0_h(t\pm\frac{h}{2}))+
\sum\limits_{m=2}^{N+1}
\sum\limits_{s(\alpha)=0}\frac{1}{m!}V^{(m)}(\tilde{q}^0_h(t\pm\frac{h}{2}))
(\tilde{q}_h(t\pm\frac{h}{2}))^{\alpha}
\end{aligned}
\label{newuu}%
\end{equation*}
with
\begin{equation*}
\begin{aligned}
&V(q)=\frac{\omega(q_1)^2-\omega_0^2}{2}
|q_2|^2,\\
&\tilde{q}(t\pm\frac{h}{2})=\big(\tilde{q}^{-N+1}_h(t\pm\frac{h}{2}),\ldots,
\tilde{q}^{0}_h(t\pm\frac{h}{2}),\ldots,\tilde{q}^{N-1}_h(t\pm\frac{h}{2})\big).
\end{aligned}
\end{equation*}

\textit{\textbf{Proof of the first statement.}}
 Consider the vector function $\tilde{q}(\lambda,t\pm\frac{h}{2})$ of $\lambda$
below
$$\tilde{q}(\lambda,t\pm\frac{h}{2})=\big( \mathrm{e}^{\mathrm{i}(-N+1)\lambda
/\epsilon}\tilde{q}^{-N+1}_h(t\pm\frac{h}{2}),\cdots,
\tilde{q}^{0}_h(t\pm\frac{h}{2}),\cdots,\mathrm{e}^{\mathrm{i}(N-1)\lambda
/\epsilon}\tilde{q}^{N-1}_h(t\pm\frac{h}{2})\big).$$  It is clear
  that $\mathcal{V}( \tilde{q}(\lambda,t\pm\frac{h}{2}))$ and $\mathcal{U}( \tilde{q}(\lambda,t\pm\frac{h}{2}))$ do  not depend on
$\lambda$. Thus, we obtain
\begin{equation*}
\begin{aligned}0=&\frac{1}{2}\frac{d}{d\lambda}\Big[\frac{ \mathcal{V}( \tilde{q}(\lambda,t+\frac{h}{2}))}{\epsilon^2}+
 \mathcal{U}( \tilde{q}(\lambda,t+\frac{h}{2}))+\frac{\mathcal{V}(
 \tilde{q}(\lambda,t-\frac{h}{2}))}{\epsilon^2} +
 \mathcal{U}(
\tilde{q}(\lambda,t-\frac{h}{2}))\Big]\\
=&\frac{1}{2}\sum\limits_{|k|\leq
N+1}\frac{\mathrm{i}k}{\epsilon}\mathrm{e}^{\mathrm{i}k\lambda
/\epsilon} (\tilde{q}^{k}_h(t+\frac{h}{2}))^\intercal \Big(
\frac{\nabla_{q^{k}} \mathcal{V}(
\tilde{q}(\lambda,t+\frac{h}{2}))}{\epsilon^2}
+\nabla_{q^{k}}\mathcal{U}(
\tilde{q}(\lambda,t+\frac{h}{2}))\Big)\\
&+\frac{1}{2}\sum\limits_{|k|\leq
N+1}\frac{\mathrm{i}k}{\epsilon}\mathrm{e}^{\mathrm{i}k\lambda
/\epsilon} (\tilde{q}^{k}_h(t-\frac{h}{2}))^\intercal \Big(
\frac{\nabla_{q^{k}} \mathcal{V}(
\tilde{q}(\lambda,t-\frac{h}{2}))}{\epsilon^2}
+\nabla_{q^{k}}\mathcal{U}(
\tilde{q}(\lambda,t-\frac{h}{2}))\Big).\end{aligned}
\end{equation*}
 By letting $\lambda=0$, one has
\begin{equation}\begin{aligned}
&0=\frac{1}{2}\sum\limits_{|k|\leq N+1}\frac{\mathrm{i}k}{\epsilon}
(\tilde{q}^{k}_h(t+\frac{h}{2}))^\intercal \Big(
\frac{\nabla_{q^{k}} \mathcal{V}( \tilde{q}(\lambda,t+\frac{h}{2}))
}{\epsilon^2} +\nabla_{q^{k}}\mathcal{U}(
\tilde{q}(\lambda,t+\frac{h}{2}))\Big)\\
&+\frac{1}{2}\sum\limits_{|k|\leq N+1}\frac{\mathrm{i}k}{\epsilon}
(\tilde{q}^{k}_h(t-\frac{h}{2}))^\intercal \Big(
\frac{\nabla_{q^{k}} \mathcal{V}(
\tilde{q}(\lambda,t-\frac{h}{2}))}{\epsilon^2}
+\nabla_{q^{k}}\mathcal{U}( \tilde{q}(\lambda,t-\frac{h}{2}))\Big).
\label{sec I}%
\end{aligned}
\end{equation}
It then from the last two identities of \eqref{methods-inva-nnew}
that
\begin{equation*}
\begin{aligned}
&\frac{\nabla_{q^{k}} \mathcal{V}( \tilde{q}(\lambda,t+\frac{h}{2}))
}{\epsilon^2} +\nabla_{q^{k}}\mathcal{U}(
\tilde{q}(\lambda,t+\frac{h}{2}))\\
=&\frac{1}{2}\Big[ (-h^2\bar{b}_1(h\Omega))^{-1}\mathcal{L}(hD)
q^k_{h}(t)+(-hb_1(h\Omega))^{-1}\mathcal{L}(hD)
p^k_{h}(t)\Big]+\mathcal{O}(\epsilon^{N}),\\
&\frac{\nabla_{q^{k}} \mathcal{V}(
\tilde{q}(\lambda,t-\frac{h}{2}))}{\epsilon^2}
+\nabla_{q^{k}}\mathcal{U}(
\tilde{q}(\lambda,t-\frac{h}{2}))\\
=&\frac{1}{2}\Big[ (-h^2\bar{b}_1(h\Omega))^{-1}\mathcal{L}(hD)
q^k_{h}(t)-(-hb_1(h\Omega))^{-1}\mathcal{L}(hD)
p^k_{h}(t)\Big]+\mathcal{O}(\epsilon^{N}).
\end{aligned}
\end{equation*}
Inserting this result into \eqref{sec I} and considering the first
two identities of \eqref{methods-inva-nnew}, one obtains
\begin{eqnarray}
 0 &=& \frac{\mathrm{i}}{2\epsilon} \sum\limits_{|k|\leq N+1}k
\big(q^{-k}_{h}(t)\big)^\intercal
\cos(\frac{1}{2}h\Omega)(-h^2\bar{b}_1(h\Omega))^{-1}\mathcal{L}(hD)
q^k_{h}(t) \label{1sta er 1}\\
    &\  & + \frac{\mathrm{i}}{2\epsilon} \sum\limits_{|k|\leq N+1}k \big(p^{-k}_h(t)\big)^\intercal \frac{1}{2}h
\mathrm{sinc}(\frac{1}{2}h\Omega)(-h
b_1(h\Omega))^{-1}\mathcal{L}(hD)
p^k_{h}(t)  \label{1sta er 2}\\
    &\ & +\mathcal{O}(\epsilon^{N}). \nonumber
\end{eqnarray}
In terms of the Taylor expansion of $\mathcal{L}(hD)$ on p. 500 of
\cite{hairer2006}, we have
$$\mathcal{L}(hD)y=4\sin^2(\frac{1}{2}h\Omega ) y+\sum\limits_{l\geq0}\frac{2h^{2l+2}}{(2l+2)!}
                                                y^{(2l+2)}.
$$
With this and  the ``magic formulas" on p. 508 of \cite{hairer2006},
we deduce that (omitting the superscript $k$ and $(t)$ on
$q^{k}(t)$)
\begin{equation}\label{ld1}
\begin{aligned}
& \mathrm{Re}(\textmd{right-hand\ side\ term\ of\  \eqref{1sta er 1}}) \\
%=&\frac{\mathrm{i}}{2\epsilon} \sum\limits_{|k|\leq
%N+1}k\mathrm{Im}\Big[\big(\bar{q}_{h} \big)^\intercal
%\cos(\frac{1}{2}h\Omega)(-h^2\bar{b}_1(h\Omega))^{-1}
% 4\sin^2(\frac{1}{2}h\Omega)q_{h}  \\
% &+ \big(\bar{q}_{h} \big)^\intercal
%\cos(\frac{1}{2}h\Omega)(-h^2\bar{b}_1(h\Omega))^{-1}
%  \sum\limits_{l\geq0}\frac{2h^{2l+2}}{(2l+2)!}
% q^{(2l+2)}_{h} \Big)\Big]\\
=&\frac{\mathrm{i}}{2\epsilon} \sum\limits_{|k|\leq
N+1}k\mathrm{Im}\Big[ \big(\bar{q}_{h} \big)^\intercal
\cos(\frac{1}{2}h\Omega)(-h^2\bar{b}_1(h\Omega))^{-1}
  4\sin^2(\frac{1}{2}h\Omega )q_{h}  \Big]\\
  &-\frac{\mathrm{i}}{2\epsilon} \sum\limits_{|k|\leq N+1}k\frac{d}{dt}\sum\limits_{l\geq0}\frac{2h^{2l}}{(2l+2)!}\mathrm{Im}\Big[\big(\bar{q}_{h} \big)^\intercal
\cos(\frac{1}{2}h\Omega)( \bar{b}_1(h\Omega))^{-1}
                                                q^{(2l+1)}_{h}
                                               -\big(\bar{\dot{q}}_{h} \big)^\intercal
\\ &\cos(\frac{1}{2}h\Omega)( \bar{b}_1(h\Omega))^{-1}
                                                q^{(2l)}_{h}
+\cdots\pm  \big( \bar{q}^{(l)}_{h} \big)^\intercal
\cos(\frac{1}{2}h\Omega)( \bar{b}_1(h\Omega))^{-1}
                                                q^{(l+1)}_{h} \Big].
\end{aligned}
\end{equation}
The first expression of this result becomes zero and the last two
lines are a total derivative. Likewise, the same conclusion holds
for the term   \eqref{1sta er 2}.
 Therefore,     there exists a
function $\mathcal{I}[\zeta,\eta]$ such that
$\frac{d}{dt}\mathcal{I}[\zeta,\eta](t)=\mathcal{O}(\epsilon^{N})$
and an integration yields the first statement   of the theorem.

\textit{\textbf{Proof of the second statement.}}   It can be
obtained from the scheme of ERKN integrators that
$$2h\mathrm{sinc}(h\Omega)p_n=q_h(t+h)-q_h(t-h)+\mathcal{O}(h^3).$$
Following the analysis on p. 133 of \cite{Hairer16} and by the
Taylor series around $h=0$, we have
\begin{equation*}
\begin{aligned}p_{n,2}=&\frac{\mathrm{i} }{h}
\big(\mathrm{e}^{\mathrm{i}\phi(t)/\epsilon} \zeta_{h,2}^{1}(t)
-\mathrm{e}^{-\mathrm{i}\phi(t)/\epsilon} \zeta_{h,2}^{-1}(t)\big)
\frac{\sin\big(\frac{h}{\epsilon}\dot{\phi}(t)\big)}{\mathrm{sinc}(h\upsilon) }+\mathcal{O}(\epsilon)\\
=&\frac{\mathrm{i} }{\epsilon}
\big(\mathrm{e}^{\mathrm{i}\phi(t)/\epsilon} \zeta_{h,2}^{1}(t)
-\mathrm{e}^{-\mathrm{i}\phi(t)/\epsilon}
\zeta_{h,2}^{-1}(t)\big)\frac{\mathrm{sinc}(h\upsilon/2)}{\mathrm{sinc}(h\upsilon)
} \omega_h(\zeta_{h,1}^{0}(t))+\mathcal{O}(\epsilon ).\end{aligned}
\end{equation*}
This leads to
\begin{equation}\label{re ne1}
\begin{aligned}&\eta^1_{n,2}(t)
=\frac{\mathrm{sinc}(h\upsilon/2)}{\mathrm{sinc}(h\upsilon)}\frac{\omega_h(\zeta_{h,1}^{0}(t))}{
\epsilon} \zeta_{h,2}^{1}(t) +\mathcal{O} (\epsilon ),\end{aligned}
\end{equation}
and
\begin{equation*}%\label{re ne2}
\begin{aligned}&|p_{n,2}|^2
=\frac{\mathrm{sinc}^2(h\upsilon/2) }{\mathrm{sinc}^2(h\upsilon)
\epsilon^2} |\mathrm{e}^{\mathrm{i}\phi(t)/\epsilon}
\zeta_{h,2}^{1}(t) -\mathrm{e}^{-\mathrm{i}\phi(t)/\epsilon}
\zeta_{h,2}^{-1}(t)|^2 \omega^2_h(\zeta_{h,1}^{0}(t))+\mathcal{O}
(\epsilon^2 ).\end{aligned}
\end{equation*}
Furthermore, by the expansion of $q_n$ derived in Proposition
\ref{energy thm},  one gets
\begin{equation*}
\begin{aligned}|q_{n,2}|^2
=&  |\mathrm{e}^{\mathrm{i}\phi(t)/\epsilon} \zeta_{h,2}^{1}(t)
+\mathrm{e}^{-\mathrm{i}\phi(t)/\epsilon} \zeta_{h,2}^{-1}(t)|^2
+\mathcal{O}(\epsilon^3).\end{aligned}
\end{equation*}
Therefore,   inserting the above two formulae into \eqref{modi act}
yields
\begin{equation*}
\begin{aligned}I_h(q_n,p_n)%=&\frac{\Psi\mathrm{sinc} (h\upsilon)}{\mathrm{sinc}^2 (h\upsilon/2)} \frac{|p_{n,2}|^2}{
%2\omega_h(q_{n,1})} + \frac{\Psi}{\mathrm{sinc} (h\upsilon)}\frac{
%\omega_h(q_{n,1})}{2\epsilon^2}|q_{n,2}|^2\\
%=&\frac{1}{2}\mathrm{sinc}^2(h\upsilon) \cos(\frac{1}{2}h\upsilon)(
%\bar{b}_1(h\upsilon))^{-1}\frac{|p_{n,2}|^2}{\omega_h(q_{n,1})}
%+\cos(\frac{1}{2}h\upsilon)(
%\bar{b}_1(h\upsilon))^{-1}\frac{\omega_h(q_{n,1})}{2\epsilon^2}|q_{n,2}|^2\\
%=&
% \frac{1}{2\epsilon^2}\frac{1}{\mathrm{sinc} (h\upsilon)} \omega_h(\zeta_{h,1}^{0}(t))
% \Big(|\zeta_{h,2}^{1}
%-\zeta_{h,2}^{-1}|^2+|\zeta_{h,2}^{1}
%+\zeta_{h,2}^{-1}|^2\Big)+\mathcal{O}(\epsilon)\\
=& \frac{\Psi(h\upsilon,\zeta_{h,1}^{0}(t))
\mathrm{sinc}(h\upsilon/2)}{\epsilon^2} \omega_h(\zeta_{h,1}^{0}(t))
|\zeta_{h,2}^1(t)|^2
 +\mathcal{O}(\epsilon).\end{aligned}
\end{equation*}

 In what follows,
we elaborate the dominant term of $\mathcal{I}[\zeta,\eta](t)$. Fix
$t$ and consider $q^k_{h}(t)=\mathrm{e}^{\mathrm{i}k
\phi(t)/\epsilon}\zeta_h^k(t).$
 For $k \neq 0$, from Lemma 5.1 given in \cite{Hairer16}, it follows
 that
\begin{equation*}\begin{aligned}
\frac{1}{m!}\frac{d^m}{dt^m}q^k_{h}(t)=\frac{1}{m!}\zeta_h^k(t)
\big(\frac{\mathrm{i}k}{\epsilon}\dot{\phi}(t)\big)^m
\mathrm{e}^{\mathrm{i}k
\phi(t)/\epsilon}+\mathcal{O}\Big(\frac{1}{(m/M)!}\big(\frac{c}{\epsilon}\big)^{m-1-|k|}
\Big),
%\label{boun-der}%
\end{aligned}
\end{equation*}
where $c$ and the constant symbolised by $\mathcal{O}$ are
independent of $m  \geq 1$ and  $\epsilon$.

  It can be seen from inserting this into $(-1)^r
\frac{d^r}{dt^r}\big( \bar{q}^k_{h}(t)\big)^\intercal
\cos(\frac{1}{2}h\Omega)( \bar{b}_1(h\Omega))^{-1}\frac{d^s}{dt^s}
q^k_{h}(t)$  that the dominant term is to be the same whenever
$r+s=2l+ 1$. Thus,
  it is  obtained that (omit   the superscript $k$ and $(t)$ in $q^k_{h}(t)$)
\begin{equation*}
\begin{aligned}
&\Big[\big(\bar{q}_{h} \big)^\intercal \cos(\frac{1}{2}h\Omega)(
\bar{b}_1(h\Omega))^{-1}
                                                q^{(2l+1)}_{h}
                                              -\big(\bar{\dot{q}}_{h} \big)^\intercal
\cos(\frac{1}{2}h\Omega)( \bar{b}_1(h\Omega))^{-1}
                                                q^{(2l)}_{h}
 \\ &+\cdots\pm  \big( \bar{q}^{(l)}_{h} \big)^\intercal
\cos(\frac{1}{2}h\Omega)( \bar{b}_1(h\Omega))^{-1}
                                                q^{(l+1)}_{h} \Big]\\
=&(l+1)\big(\frac{\mathrm{i}k}{\epsilon}\dot{\phi} \big)^{2l+1}
 (\bar{\zeta}_h^k )^\intercal \cos(\frac{1}{2}h\Omega)(
\bar{b}_1(h\Omega))^{-1}\zeta_h^k
+\mathcal{O}\Big(\frac{1}{(l/M)!}\big(\frac{c}{\epsilon}\big)^{2l-2|k|}
\Big).
\end{aligned}
\end{equation*}
This implies  that the total derivative of \eqref{ld1} is given by
\begin{equation}\label{one main part}
\begin{aligned}
&\frac{\mathrm{i}}{2\epsilon} \sum\limits_{|k|\leq N+1}k
\Big[-\sum\limits_{l\geq0}\frac{2h^{2l}}{(2l+2)!}\Big(\big(\bar{q}_{h}(t)\big)^\intercal
\cos(\frac{1}{2}h\Omega)( \bar{b}_1(h\Omega))^{-1}
                                                q^{(2l+1)}_{h}(t)
                                               \\ &-\big(\bar{\dot{q}}_{h}(t)\big)^\intercal
\cos(\frac{1}{2}h\Omega)( \bar{b}_1(h\Omega))^{-1}
                                                q^{(2l)}_{h}(t)
+\cdots\pm  \big( \bar{q}^{(l)}_{h}(t)\big)^\intercal
\cos(\frac{1}{2}h\Omega)\\
&( \bar{b}_1(h\Omega))^{-1}
                                                q^{(l+1)}_{h}(t)\Big)\Big]+\mathcal{O}(\epsilon)\\
=&\frac{-\mathrm{i}}{2\epsilon}\sum\limits_{0<|k|\leq N+1} \Big[
\frac{\mathrm{i}k}{h} \sum\limits_{l\geq0}\frac{(-1)^{l}}{(2l+1)!}
\big(\frac{kh}{\epsilon}\dot{\phi}(t)\big)^{2l+1}
(\bar{\zeta}_h^k(t))^\intercal \cos(\frac{1}{2}h\Omega)(
\bar{b}_1(h\Omega))^{-1}\\
& \zeta_h^k(t)+\mathcal{O}(\epsilon^{2|k|}) \Big]+\mathcal{O}(\epsilon)\\
=&  \frac{1}{2\epsilon h} \sum\limits_{0<|k|\leq N+1} \Big[ k
\sin\big(\frac{kh}{\epsilon}\dot{\phi}(t)\big)
(\bar{\zeta}_h^k(t))^\intercal \cos(\frac{1}{2}h\Omega)(
\bar{b}_1(h\Omega))^{-1}\zeta_h^k(t) +\mathcal{O}(h\epsilon^{2|k|})
\Big]\\
&+\mathcal{O}(\epsilon)\\
%=& \frac{1}{\epsilon h}
%\sin\big(\frac{h}{\epsilon}\dot{\phi}(t)\big)
%(\bar{\zeta}_h^1(t))^\intercal \cos(\frac{1}{2}h\Omega)(
%\bar{b}_1(h\Omega))^{-1}\zeta_h^1(t) +\mathcal{O}(\epsilon)\\
=& \frac{1}{\epsilon h}
\sin\big(\frac{h}{\epsilon}\dot{\phi}(t)\big)
  \cos(\frac{1}{2}h\upsilon)(
\bar{b}_1(h\upsilon))^{-1}|\zeta_{h,2}^1(t)|^2
+\mathcal{O}(\epsilon).
\end{aligned}
\end{equation}
In a similar way,  we  can obtain the total derivative of
\eqref{1sta er 2} as follows
\begin{equation*}
\begin{aligned}
 &\frac{h}{\epsilon}
\sin\big(\frac{h}{\epsilon}\dot{\phi}(t)\big)
\frac{1}{2}\mathrm{sinc}(\frac{1}{2}h\upsilon)(
b_1(h\upsilon))^{-1}|\eta_{h,2}^1(t)|^2  +\mathcal{O}(\epsilon).
\end{aligned}
\end{equation*}
Combine these two total derivatives together and then we obtain
\begin{equation*}
\begin{aligned}
&\mathcal{I}[\zeta,\eta](t) =\frac{1}{\epsilon h}
\sin\big(\frac{h}{\epsilon}\dot{\phi}(t)\big)
  \cos(\frac{1}{2}h\upsilon)(
\bar{b}_1(h\upsilon))^{-1}|\zeta_{h,2}^1(t)|^2
\\
&+\frac{h}{\epsilon} \sin\big(\frac{h}{\epsilon}\dot{\phi}(t)\big)
\frac{1}{2}\mathrm{sinc}(\frac{1}{2}h\upsilon)(
b_1(h\upsilon))^{-1}|\eta_{h,2}^1(t)|^2  +\mathcal{O}(\epsilon)\\
% =&\frac{1}{\epsilon h}
%\sin\big(\frac{h}{\epsilon}\dot{\phi}(t)\big)\Big[
%  \cos(\frac{1}{2}h\upsilon)(
%\bar{b}_1(h\upsilon))^{-1}
%\\
%&+ \frac{1}{2}h^2\mathrm{sinc}(\frac{1}{2}h\upsilon)(
%b_1(h\upsilon))^{-1}\frac{\mathrm{sinc}^2(h\upsilon/2)}{\mathrm{sinc}^2(h\upsilon)}\frac{\omega_h^2(\zeta_{h,1}^{0}(t))}{
%\epsilon^2} \Big]|\zeta_{h,2}^1(t)|^2  +\mathcal{O}(\epsilon)\\
 =&\frac{1}{\epsilon h}
\sin\big(\frac{h}{\epsilon}\dot{\phi}(t)\big)\Big[
  \cos(\frac{1}{2}h\upsilon)(
\bar{b}_1(h\upsilon))^{-1}
\\
&+ \frac{1}{2}h^2\upsilon^2\mathrm{sinc}(\frac{1}{2}h\upsilon)(
b_1(h\upsilon))^{-1}\Big(\frac{\mathrm{sinc}^2(h\upsilon/2)}{\mathrm{sinc}^2(h\upsilon)}\frac{\omega_h^2(\zeta_{h,1}^{0}(t))}{
\omega_0^2}\Big) \Big]|\zeta_{h,2}^1(t)|^2  +\mathcal{O}(\epsilon),
\end{aligned}
\end{equation*}
where  the connection \eqref{re ne1} between $\eta_{h,2}^1$ and
$\zeta_{h,2}^1$    was used here.

 %It follows from
%\begin{equation*}
%\begin{aligned}
%\frac{\mathrm{sinc}^2(h\upsilon/2)}{\mathrm{sinc}^2(h\upsilon)}\frac{\omega_h^2(\zeta_{h,1}^{0}
%)}{ \omega_0^2}&=\frac{\omega^2(\zeta_{h,1}^{0} )}{
%\omega_0^2}\frac{\mathrm{sinc}^2(h\upsilon/2)}{\mathrm{sinc}^2(h\upsilon)}\Big(
%1-\frac{h^2}{4\epsilon^2} \mathrm{sinc}^2(h\upsilon/2)
%\omega^2(\zeta_{h,1}^{0})\Big)
%\end{aligned}
%\end{equation*}
%that this expression  is  of the form $1+\mathcal{O}(\epsilon)$
%based on the condition \eqref{numerical cond2} and the fact that
%$\omega(q_1(t))$ is a slowly changing frequency. Thence one arrives
%at
%\begin{equation*}
%\begin{aligned}
%\mathcal{I}[\zeta,\eta](t)
% =&\frac{1}{\epsilon h}
%\sin\big(\frac{h}{\epsilon}\dot{\phi}(t)\big)\Big[
%  \cos(\frac{1}{2}h\upsilon)(
%\bar{b}_1(h\upsilon))^{-1} +
%\frac{1}{2}h^2\upsilon^2\mathrm{sinc}(\frac{1}{2}h\upsilon)(
%b_1(h\upsilon))^{-1} \Big]\\
% &|\zeta_{h,2}^1(t)|^2
%+\mathcal{O}(\epsilon) =\frac{1}{\epsilon
%h}\frac{2\sin\big(\frac{h}{\epsilon}\dot{\phi}(t)\big)
%}{\mathrm{sinc}(h\upsilon)
%\mathrm{sinc}(h\upsilon/2)}|\zeta_{h,2}^1(t)|^2
%+\mathcal{O}(\epsilon).
%\end{aligned}
%\end{equation*}
In the light of \eqref{phase func} and the following fact
\begin{equation*}
 \sin(\frac{h\dot{\phi}(t)}{\epsilon}
 )=2\sin(\frac{h\dot{\phi}(t)}{\epsilon}
 ) \cos(\frac{h\dot{\phi}(t)}{\epsilon}
 )=\frac{h}{\epsilon}
\mathrm{sinc}(h\upsilon/2) \omega_h(\zeta_{h,1}^{0}(t)),
\end{equation*}
it is  confirmed  that
\begin{equation*}
\begin{aligned}
\mathcal{I}[\zeta,\eta](t)
=&\frac{\Psi(h\upsilon,\zeta_{h,1}^{0}(t))}{\epsilon^2}
\mathrm{sinc}(h\upsilon/2)
\omega_h(\zeta_{h,1}^{0}(t))|\zeta_{h,2}^1(t)|^2
+\mathcal{O}(\epsilon)=I_h(q_n,p_n)+\mathcal{O}(\epsilon).
\end{aligned}
\end{equation*}
The proof is complete.
 \hfill
\end{proof}

\begin{rem} Based on the above proposition and following the same approach used in Section XIII.7 of
\cite{hairer2006},  the long-time near-conservation of the modified
action $I_h$ given in Theorem \ref{pre mod I} can be obtained.
\end{rem}

\section{Energy conservation of the integrator (proof of Theorem \ref{pre mod H})} \label{sec:energy-invariants}
In this section, we prove the  near-conservation of a modified
energy for the ERKN integrator.
 We   have the following
almost-invariant of the modulated Fourier expansion.
\begin{prop}\label{H invariant thm}
Under the conditions of Theorem \ref{second invariant thm}, there
exists a function $\mathcal{H}[\zeta,\eta]$ such that
\begin{equation*}
\begin{aligned}
&\mathcal{H}[\zeta,\eta](t)=\mathcal{H}[\zeta,\eta](0)+\mathcal{O}(t\epsilon^{N}) \ \ \textmd{for}\ 0\leq t\leq h,\\
&\mathcal{H}[\zeta,\eta](nh)=H_h(q_n,p_n)+\mathcal{O}(\epsilon) \ \
\textmd{for}\ n=0,1,
\label{HH}%
\end{aligned}
\end{equation*}
where the constants  symbolised by $\mathcal{O}$ are independent of
$n,\ h$ and $\epsilon$, and  $\mathcal{H}[\zeta,\eta]$ depends on
$\zeta, \dot{\zeta},\ldots, \zeta^{(M)}$  and
 $\eta,  \dot{\eta},\ldots, \eta^{(M)}$.
\end{prop}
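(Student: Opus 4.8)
The plan is to follow the proof of Proposition~\ref{second invariant thm} line for line, but to replace the phase-rotation symmetry used there by the \emph{time-translation} symmetry of the modulated system. As in that proof, I would work on an interval of length $O(h)$, replace the coefficient functions $\zeta_h^k,\eta_h^k$ and the phase function $\phi$ by Taylor polynomials of degree $M\geq N+3$, and start from the modulated equations \eqref{methods-inva-nnew} together with the potentials $\mathcal{U},\mathcal{V}$ and the relations $\tilde q^k_h(t\pm\frac h2)=\cos(\frac12 h\Omega)q^k_h(t)\pm\frac12 h\,\mathrm{sinc}(\frac12 h\Omega)p^k_h(t)$.

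The key step is a direct differentiation in $t$ rather than in the phase parameter $\lambda$. Applying $\frac{d}{dt}$ to the potential sum $\frac1{\epsilon^2}\mathcal{V}(\tilde q(t+\frac h2))+\mathcal{U}(\tilde q(t+\frac h2))+\frac1{\epsilon^2}\mathcal{V}(\tilde q(t-\frac h2))+\mathcal{U}(\tilde q(t-\frac h2))$ and using the chain rule produces exactly the expression
$$\sum_{|k|\le N+1}\big(\dot{\tilde q}^{-k}_h(t\pm\tfrac h2)\big)^{\!\intercal}\Big(\tfrac{1}{\epsilon^2}\nabla_{q^{k}}\mathcal{V}+\nabla_{q^{k}}\mathcal{U}\Big)\big(\tilde q(t\pm\tfrac h2)\big),$$
which is the analogue of the right-hand side of \eqref{sec I} with the test function $\frac{\mathrm{i}k}{\epsilon}\tilde q^{-k}_h$ replaced by $\dot{\tilde q}^{-k}_h$. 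Solving \eqref{methods-inva-nnew} for the gradients, substituting, and expressing $\tilde q^k_h$ through $q^k_h,p^k_h$, this reduces to sums of the form $\sum_k(\dot{\bar q}^k_h)^{\intercal}\cos(\frac12 h\Omega)(h^2\bar b_1(h\Omega))^{-1}\mathcal{L}(hD)q^k_h$ and $\sum_k(\dot{\bar p}^k_h)^{\intercal}\frac12 h\,\mathrm{sinc}(\frac12 h\Omega)(hb_1(h\Omega))^{-1}\mathcal{L}(hD)p^k_h$ up to $\mathcal{O}(\epsilon^N)$. Inserting the Taylor expansion $\mathcal{L}(hD)y=4\sin^2(\frac12 h\Omega)y+\sum_{l\ge0}\frac{2h^{2l+2}}{(2l+2)!}y^{(2l+2)}$ and invoking the ``magic formulas'' on p.~508 of \cite{hairer2006}, each such sum splits into a part that cancels $\frac{d}{dt}$ of the potential sum and a part that is a genuine total time derivative. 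I would then define $\mathcal{H}[\zeta,\eta](t)$ to be the resulting primitive, so that $\frac{d}{dt}\mathcal{H}[\zeta,\eta](t)=\mathcal{O}(\epsilon^N)$, and integrate over $0\le t\le h$ to obtain the first statement. The only new point relative to the constant-frequency argument is that $\mathcal{V}$ carries extra $t$-dependence through $\omega(\tilde q^0_{h,1})$; this is handled automatically because the $k=0$ component is among the variables differentiated by the chain rule, exactly as in \cite{Hairer16}.

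For the second statement I would isolate the dominant part of $\mathcal{H}[\zeta,\eta]$. The $k=0$ kinetic contribution gives $\frac12|\dot\zeta^0_{h,1}(t)|^2=\frac12|p_{n,1}|^2+\mathcal{O}(\epsilon)$ by \eqref{initial eta modula sys}. The $k=\pm1$ modes, after the relation \eqref{re ne1} between $\eta^1_{h,2}$ and $\zeta^1_{h,2}$, contribute a term of the form $\dot\phi(t)\cdot(\text{action-like quantity})$; since \eqref{phase func} yields $\dot\phi(t)=\tilde\omega_h(\zeta^0_{h,1}(t))$ and Proposition~\ref{second invariant thm} already identifies that action-like quantity with $I_h(q_n,p_n)+\mathcal{O}(\epsilon)$, this gives $\tilde\omega_h(q_1^n)\,I_h(q_n,p_n)+\mathcal{O}(\epsilon)$. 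The potential $\mathcal{U}$ evaluated at $\tilde q^0_h=q^n+\mathcal{O}(h)$ gives $U(q^n)+\mathcal{O}(\epsilon)$, and the potential $\mathcal{V}$, after subtracting the part already absorbed into $\tilde\omega_h I_h$ by the $4\sin^2(\frac12 h\Omega)$ terms of $\mathcal{L}(hD)$, leaves precisely the residual $\big(1-\Psi(h\upsilon,q_1)\bar b_1(h\upsilon)\big)\frac{\omega^2(q_1)-\omega_0^2}{\epsilon^2}|q_2|^2$ of \eqref{modi ene}. Matching these pieces against \eqref{modi ene} for $n=0,1$ finishes the proof.

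I expect the main obstacle to be the bookkeeping in the last step: extracting the dominant terms of $\mathcal{H}[\zeta,\eta]$ from the ``magic formula'' expansion and checking that all the $h\upsilon$-dependent factors entering through $\cos(\frac12 h\upsilon)$, $\mathrm{sinc}(\frac12 h\upsilon)$, $\bar b_1(h\upsilon)^{-1}$ and $b_1(h\upsilon)^{-1}$ recombine into exactly the function $\Psi(h\upsilon,q_1)$ and the prefactor $1-\Psi(h\upsilon,q_1)\bar b_1(h\upsilon)$; the consistency check that everything collapses to the RKN expressions of Corollary~\ref{pre RKN} (where $1-\Psi\bar b_1=0$) as $\upsilon\to0$ is a useful guide. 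A secondary difficulty, shared with \cite{Hairer16}, is that $\mathcal{V}$ depends on the solution through $\omega(\zeta^0_{h,1})$, so the invariance argument is not that of an autonomous potential; this is controlled by treating $\zeta^0_{h,1}$ as one of the expansion variables throughout.
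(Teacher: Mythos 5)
Your proposal follows essentially the same route as the paper's proof: both replace the phase-rotation argument of the action case by direct differentiation in $t$ of the potential sum, substitute the modulated equations \eqref{methods-inva-nnew}, invoke the ``magic formulas'' to exhibit a total derivative defining $\mathcal{H}[\zeta,\eta]$, and then extract the dominant terms ($\frac12|\dot\zeta^0_{h,1}|^2$, the $\dot\phi\cdot\mathcal{I}$ piece via \eqref{phase func}, $U$, and the residual $(1-\Psi\bar b_1)$-term from $\mathcal{V}$) to match \eqref{modi ene}. The details you flag as the main bookkeeping burden (the split $(l+\tfrac12)=(l+1)-\tfrac12$ and the identity $\cos(\tfrac12 h\upsilon)-\cos(\tfrac{h}{\epsilon}\dot\phi)=\kappa^2\bar b_1(h\upsilon)(\omega^2-\omega_0^2)$ from \eqref{fir phi}) are exactly the steps the paper carries out.
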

\begin{proof}
\textit{\textbf{Proof of the first statement.}} Similarly to that of
Theorem \ref{second invariant thm}, it is easy to get
\begin{eqnarray}
 &\ & \frac{1}{2}\frac{d}{dt}\Big[\frac{1}{\epsilon^2}
\mathcal{V}( \tilde{q}(t+\frac{h}{2}))+
 \mathcal{U}( \tilde{q}(t+\frac{h}{2}))+\frac{1}{\epsilon^2} \mathcal{V}(
 \tilde{q}(t-\frac{h}{2}))+
 \mathcal{U}(
\tilde{q}(t-\frac{h}{2}))\Big]  \nonumber \\
   &= & \frac{1}{2}\sum\limits_{|k|\leq N+1}
\big(\dot{q}^{-k}_{h}(t)\big)^\intercal
\cos(\frac{1}{2}h\Omega)(-h^2\bar{b}_1(h\Omega))^{-1}
\mathcal{L}(hD) q^k_{h}(t)  \label{sta er 1}\\
    &\  & +\frac{1}{2}\sum\limits_{|k|\leq N+1}
\big(\dot{p}^{-k}_h(t)\big)^\intercal \frac{1}{2}h
\mathrm{sinc}(\frac{1}{2}h\Omega)(-h
b_1(h\Omega))^{-1}\mathcal{L}(hD) p^k_{h}(t) \label{sta er 2}\\
    &\ & +\mathcal{O}(\epsilon^{N}). \nonumber
\end{eqnarray}
Using again the ``magic formulas" on p. 508 of \cite{hairer2006}, we
deduce that (omitting the superscript $k$ and $t$ on $q^{k}(t)$)
\begin{equation}\label{en de1}
\begin{aligned}
& \textmd{the term}\  \eqref{sta er 1}\\
%=&\frac{1}{2}\sum\limits_{|k|\leq
%N+1}\mathrm{Re}\Big[\big(\bar{\dot{q}}_{h} \big)^\intercal
%\cos(\frac{1}{2}h\Omega)(-h^2\bar{b}_1(h\Omega))^{-1}
% \Big(4\sin^2(\frac{1}{2}h\Omega )q_{h} \\
% &+\sum\limits_{l\geq0}\frac{2h^{2l+2}}{(2l+2)!}
%                                                q^{(2l+2)}_{h} \Big)\Big],\\
=&\frac{1}{2}\sum\limits_{|k|\leq
N+1}\mathrm{Re}\Big[\frac{1}{2}\frac{d}{dt}\Big(\big(\bar{q}_{h}(t)
\big)^\intercal
\cos(\frac{1}{2}h\Omega)(-h^2\bar{b}_1(h\Omega))^{-1}
  4\sin^2(\frac{1}{2}h\Omega )q_{h} \Big)\Big]\\
  &-\frac{1}{2}\sum\limits_{|k|\leq N+1}\frac{d}{dt}\sum\limits_{l\geq0}\frac{2h^{2l}}{(2l+2)!}\mathrm{Re}\Big[\big(\bar{\dot{q}}_{h} \big)^\intercal
\cos(\frac{1}{2}h\Omega)( \bar{b}_1(h\Omega))^{-1}
                                                q^{(2l+1)}_{h}
                                               \\ &-\big(\bar{\ddot{q}}_{h} \big)^\intercal
\cos(\frac{1}{2}h\Omega)( \bar{b}_1(h\Omega))^{-1}
                                                q^{(2l)}_{h}
+\cdots\mp  \big( \bar{q}^{(l)}_{h} \big)^\intercal
\cos(\frac{1}{2}h\Omega)( \bar{b}_1(h\Omega))^{-1}
                                                q^{(l+2)}_{h} \\
 &\pm \frac{1}{2} \big( \bar{q}^{(l+1)}_{h} \big)^\intercal
\cos(\frac{1}{2}h\Omega)( \bar{b}_1(h\Omega))^{-1}
                                                q^{(l+1)}_{h} \Big],
\end{aligned}
\end{equation}
which is a total derivative. In a similar way,  we obtain that the
term  of   \eqref{sta er 2}   is also a total derivative.
 Therefore,  there is a smooth function $\mathcal{K}[\zeta,\eta](t)$ such that
\begin{equation*}
\begin{aligned}\frac{d}{dt}\mathcal{K}[\zeta,\eta](t)=&-\frac{1}{2}\frac{d}{dt}\Big[\frac{1}{\epsilon^2}
\mathcal{V}( \zeta(t+\frac{h}{2}))+
 \mathcal{U}( \zeta(t+\frac{h}{2}))\\
&+\frac{1}{\epsilon^2} \mathcal{V}(
 \zeta(t-\frac{h}{2}))+
 \mathcal{U}(
\zeta(t-\frac{h}{2}))\Big]+\mathcal{O}(\epsilon^{N})\end{aligned}
\end{equation*} and then
$\frac{d}{dt}\mathcal{H}[\zeta,\eta](t)=\mathcal{O}(\epsilon^{N}).$
An integration yields the first statement  of the theorem.

\textit{\textbf{Proof of the second statement.}}
%Omitting the
%superscript $k$ and $(t)$ in $q_{h}(t)$, we have
%\begin{equation*}
%\begin{aligned}
%&\Big[\big(\bar{\dot{q}}_{h} \big)^\intercal
%\cos(\frac{1}{2}h\Omega)( \bar{b}_1(h\Omega))^{-1}
%                                                q^{(2l+1)}_{h}
%                                            -\big( \bar{\ddot{q}} _{h} \big)^\intercal
%\cos(\frac{1}{2}h\Omega)( \bar{b}_1(h\Omega))^{-1}
%                                                q^{(2l)}_{h}
%  +\cdots\\
%  &\mp  \big( \bar{q}^{(l)}_{h} \big)^\intercal
%\cos(\frac{1}{2}h\Omega)( \bar{b}_1(h\Omega))^{-1}
%                                                q^{(l+2)}_{h} \pm \frac{1}{2} \big( \bar{q}^{(l+1)}_{h} \big)^\intercal
%\cos(\frac{1}{2}h\Omega)( \bar{b}_1(h\Omega))^{-1}
%                                                q^{(l+1)}_{h} \Big]\\
%=&-(l+\frac{1}{2})\big(\frac{\mathrm{i}k}{\epsilon}\dot{\phi}
%\big)^{2l+2}
% (\bar{\zeta}_h^k )^\intercal \cos(\frac{1}{2}h\Omega)(
%\bar{b}_1(h\Omega))^{-1}\zeta_h^k
%+\mathcal{O}\Big(\frac{1}{(l/M)!}\big(\frac{c}{\epsilon}\big)^{2l+1-2|k|}
%\Big).
%\end{aligned}
%\end{equation*}
Similarly to that of Theorem \ref{second invariant thm}, the total
derivative of \eqref{en de1} is
\begin{equation*}
\begin{aligned}
&\frac{1}{h^2} \big(\bar{q}^0_{h}(t)\big)^\intercal
\cos(\frac{1}{2}h\Omega)(\bar{b}_1(h\Omega))^{-1}
 \sin^2(\frac{1}{2}h\Omega)q^0_{h}(t)\\
&+ \frac{1}{4} \big(\bar{q}^1_{h}(t)\big)^\intercal
\cos(\frac{1}{2}h\Omega)(\bar{b}_1(h\Omega))^{-1}
 \sin^2(\frac{1}{2}h\Omega)q^1_{h}(t)\\
&+\sum\limits_{0<|k|\leq N+1}\Big[\frac{1}{h^2}
\big(\bar{q}^k_{h}(t)\big)^\intercal
\cos(\frac{1}{2}h\Omega)(\bar{b}_1(h\Omega))^{-1}
 \sin^2(\frac{1}{2}h\Omega)q^k_{h}(t)\\
  &- \frac{1}{2}
 \sum\limits_{l\geq0}\frac{2h^{2l}}{(2l+2)!}(l+\frac{1}{2})
\big(\frac{\mathrm{i}k}{\epsilon}\dot{\phi}(t)\big)^{2l+2}
(\bar{\zeta}_h^k(t))^\intercal \cos(\frac{1}{2}h\Omega)(
\bar{b}_1(h\Omega))^{-1}\zeta_h^k(t)\Big]+\mathcal{O}(\epsilon).
\end{aligned}
\end{equation*}
In the light of the bounds  \eqref{coefficient func1} and
\eqref{coefficient func2},  the dominant terms of above expressions
are
\begin{equation*}
\begin{aligned}
&\frac{1}{2} |\dot{\zeta}_{h,1}^0(t)|^2+ \frac{1}{h^2}
\cos(\frac{1}{2}h\upsilon)(\bar{b}_1(h\upsilon))^{-1}
 \sin^2(\frac{1}{2}h\upsilon) |\zeta_{h,2}^1(t)|^2\\
  &- \frac{1}{2}
 \sum\limits_{l\geq0}\frac{2h^{2l}}{(2l+2)!}(l+\frac{1}{2})
\big(\frac{\mathrm{i}k}{\epsilon}\dot{\phi}(t)\big)^{2l+2}
  \cos(\frac{1}{2}h\upsilon)(
\bar{b}_1(h\upsilon))^{-1}|\zeta_{h,2}^1(t)|^2
+\mathcal{O}(\epsilon).
\end{aligned}
\end{equation*}
We split $(l+\frac{1}{2})=(l+1)-\frac{1}{2}$.  Then the above terms
become
\begin{equation*}
\begin{aligned}
&\frac{1}{2} |\dot{\zeta}_{h,1}^0(t)|^2+  \frac{1}{h^2}
\cos(\frac{1}{2}h\upsilon)(\bar{b}_1(h\upsilon))^{-1}
 \sin^2(\frac{1}{2}h\upsilon) |\zeta_{h,2}^1(t)|^2\\
 &+    \frac{  \dot{\phi}(t)}{2h\epsilon}
\sum\limits_{l\geq0}\frac{(-1)^{l}}{(2l+1)!}  \big(\frac{
h}{\epsilon}\dot{\phi}(t)\big)^{2l+1} \cos(\frac{1}{2}h\upsilon)(
\bar{b}_1(h\upsilon))^{-1}|\zeta_{h,2}^1(t)|^2\\
&+    \frac{1}{2h^2}
 \sum\limits_{l\geq0}\frac{(-1)^{l+1}}{(2l+2)!} \big(\frac{ k
h}{\epsilon}\dot{\phi}(t)\big)^{2l+2}  \cos(\frac{1}{2}h\upsilon)(
\bar{b}_1(h\upsilon))^{-1}|\zeta_{h,2}^1(t)|^2 +\mathcal{O}(\epsilon)\\
%=&\frac{1}{2} |\dot{\zeta}_{h,1}^0(t)|^2+\frac{2}{h^2}
%\sin^2(\frac{1}{2}h\upsilon)\cos(\frac{1}{2}h\upsilon)(\bar{b}_1(h\upsilon))^{-1}
%|\zeta_{h,2}^1(t)|^2\\&+ \frac{1}{h\epsilon}\dot{\phi}(t) \sin
%\big(\frac{ h}{\epsilon}\dot{\phi}(t)\big)
%\cos(\frac{1}{2}h\upsilon)(
%\bar{b}_1(h\upsilon))^{-1} |\zeta_{h,2}^1(t)|^2\\
%&- \frac{1}{h^2} \big(1-\cos(\frac{ h}{\epsilon}\dot{\phi}(t))\big)
%\cos(\frac{1}{2}h\upsilon)( \bar{b}_1(h\upsilon))^{-1}
%|\zeta_{h,2}^1(t)|^2  +\mathcal{O}(\epsilon)\\
=&\frac{1}{2} |\dot{\zeta}_{h,1}^0(t)|^2 +
\frac{1}{h\epsilon}\dot{\phi}(t) \sin \big(\frac{
h}{\epsilon}\dot{\phi}(t)\big) \cos(\frac{1}{2}h\upsilon)(
\bar{b}_1(h\upsilon))^{-1} |\zeta_{h,2}^1(t)|^2\\
&- \frac{1}{h^2} \big(1-2\sin^2(\frac{1}{2}h\upsilon)-\cos(\frac{
h}{\epsilon}\dot{\phi}(t))\big) \cos(\frac{1}{2}h\upsilon)(
\bar{b}_1(h\upsilon))^{-1} |\zeta_{h,2}^1(t)|^2
 +\mathcal{O}(\epsilon).
\end{aligned}
\end{equation*}
In a similar way, the total derivative of \eqref{sta er 2}   can be
obtained
  as follows:
\begin{equation*}
\begin{aligned}
%& \frac{1}{h\epsilon}\dot{\phi}(t) \sin \big(\frac{
%h}{\epsilon}\dot{\phi}(t)\big) \frac{1}{2}h^2
%\mathrm{sinc}(\frac{1}{2}h\upsilon)( b_1(h\upsilon))^{-1}
%|\eta_{h,2}^1(t)|^2- \frac{1}{h^2}
%\big(1-\\
%&2\sin^2(\frac{1}{2}h\upsilon)-\cos(\frac{
%h}{\epsilon}\dot{\phi}(t))\big)  \frac{1}{2}h^2
%\mathrm{sinc}(\frac{1}{2}h\upsilon)( b_1(h\upsilon))^{-1}
%|\eta_{h,2}^1(t)|^2
% +\mathcal{O}(\epsilon)\\
%=
& \frac{1}{h\epsilon}\dot{\phi}(t) \sin \big(\frac{
h}{\epsilon}\dot{\phi}(t)\big) \frac{1}{2}h^2\upsilon^2
\mathrm{sinc}(\frac{1}{2}h\upsilon)( b_1(h\upsilon))^{-1}
|\zeta_{h,2}^1(t)|^2- \frac{1}{h^2}
\big(1-2\sin^2(\frac{1}{2}h\upsilon)\\
&-\cos(\frac{ h}{\epsilon}\dot{\phi}(t))\big)
\frac{1}{2}h^2\upsilon^2 \mathrm{sinc}(\frac{1}{2}h\upsilon)(
b_1(h\upsilon))^{-1} |\zeta_{h,2}^1(t)|^2
 +\mathcal{O}(\epsilon).
\end{aligned}
\end{equation*}
It follows from \eqref{fir phi} that
\begin{equation*}
\begin{aligned}
1-2\sin^2(\frac{1}{2}h\upsilon)-\cos(\frac{
h}{\epsilon}\dot{\phi}(t))&= \cos(\frac{1}{2}h\upsilon)-\cos(\frac{
h}{\epsilon}\dot{\phi}(t))\\
&=\kappa^2\bar{b}_1(h\upsilon)\big(\omega^2(\xi_{h,1}^{0}(t))-\omega_0^2\big).
\end{aligned} %
\end{equation*}
We then have
\begin{equation*}
\begin{aligned}
 \mathcal{K}[\zeta,\eta](t)  =&\frac{1}{2}
|\dot{\zeta}_{h,1}^0(t)|^2 + \frac{1}{h\epsilon}\dot{\phi}(t) \sin
\big(\frac{ h}{\epsilon}\dot{\phi}(t)\big)\Big(
\cos(\frac{1}{2}h\upsilon)(
\bar{b}_1(h\upsilon))^{-1}\\
&+\frac{1}{2}h^2\upsilon^2 \mathrm{sinc}(\frac{1}{2}h\upsilon)(
b_1(h\upsilon))^{-1}\Big) |\zeta_{h,2}^1(t)|^2
\\
&- \frac{1}{h^2} \big(1-2\sin^2(\frac{1}{2}h\upsilon)-\cos(\frac{
h}{\epsilon}\dot{\phi}(t))\big) \Big( \cos(\frac{1}{2}h\upsilon)(
\bar{b}_1(h\upsilon))^{-1}\\
&+\frac{1}{2}h^2\upsilon^2 \mathrm{sinc}(\frac{1}{2}h\upsilon)(
b_1(h\upsilon))^{-1}\Big) |\zeta_{h,2}^1(t)|^2.
%=&\frac{1}{2} |\dot{\zeta}_{h,1}^0(t)|^2 +
%\frac{1}{h\epsilon}\dot{\phi}(t) \sin \big(\frac{
%h}{\epsilon}\dot{\phi}(t)\big)
%\frac{2}{\mathrm{sinc}(h\upsilon)\mathrm{sinc}(\frac{1}{2}h\upsilon)}
%|\zeta_{h,2}^1(t)|^2
%\\& + \frac{1}{\epsilon^2} \frac{\mathrm{sinc}(\frac{1}{2}h\upsilon)}{\mathrm{sinc}(h\upsilon)}
%\big(\omega_0^2-\omega^2(\xi_{h,1}^{0}(t))\big) |\zeta_{h,2}^1(t)|^2
%+\mathcal{O}(\epsilon).
\end{aligned}
\end{equation*}
Therefore, one arrives that
\begin{equation*}
\begin{aligned}
&\mathcal{H}[\zeta,\eta](t)\\
%=&\frac{1}{2} |\dot{\zeta}_{h,1}^0(t)|^2 +
%\frac{1}{h\epsilon}\dot{\phi}(t) \sin \big(\frac{
%h}{\epsilon}\dot{\phi}(t)\big) \cos(\frac{1}{2}h\upsilon)(
%\bar{b}_1(h\upsilon))^{-1}  |\zeta_{h,2}^1(t)|^2\\
%&+ \frac{1}{\epsilon^2}
%  \cos(\frac{1}{2}h\upsilon)
%\big(\omega_0^2-\omega^2(\xi_{h,1}^{0}(t))\big)
% |\zeta_{h,2}^1(t)|^2\\
%&+\frac{1}{2} \Big[\frac{1}{\epsilon^2} \mathcal{V}(
%\zeta(t+\frac{h}{2}))+
% \mathcal{U}(\zeta(t+\frac{h}{2}))+\frac{1}{\epsilon^2} \mathcal{V}(
%\zeta(t-\frac{h}{2}))+
% \mathcal{U}(
%\zeta(t-\frac{h}{2}))\Big]+\mathcal{O}(\epsilon)\\
=&\frac{1}{2}|\dot{\zeta}_{h,1}^0(t)|^2+
\tilde{\omega}_h(\zeta_{h,1}^{0}(t))\mathcal{I}[\zeta,\eta](t)+
\frac{1}{\epsilon^2}\Psi(h\upsilon,\zeta_{h,1}^{0}(t))\bar{b}_1(h\upsilon)
\big(\omega_0^2-\omega^2(\zeta_{h,1}^{0}(t))\big)
  |\zeta_{h,2}^1(t)|^2
\\
&+  \Big[\frac{1}{\epsilon^2} \mathcal{V}( \zeta(t))+
 \mathcal{U}(\zeta(t))\Big]+\mathcal{O}(\epsilon)\\
 =&\frac{1}{2}|\dot{\zeta}_{h,1}^0(t)|^2+   \tilde{\omega}_h(\zeta_{h,1}^{0}(t)) \mathcal{I}[\zeta,\eta](t)
 +  \Psi(h\upsilon,\zeta_{h,1}^{0}(t)) \bar{b}_1(h\upsilon)
\frac{\omega_0^2-\omega^2(\zeta_{h,1}^{0}(t))}{\epsilon^2}
  |\zeta_{h,2}^1(t)|^2
\\
&+ \Big[\frac{\omega^2(\zeta_{h,1}^{0}(t))-\omega_0^2}{\epsilon^2}
|\zeta_{h,2}^1(t)|^2+
 U(\zeta(t))\Big]+\mathcal{O}(\epsilon)\\
  =&\frac{1}{2}|\dot{\zeta}_{h,1}^0(t)|^2
 +   \tilde{\omega}_h(\zeta_{h,1}^{0}(t))
\mathcal{I}[\zeta,\eta](t) +
U(\zeta(t))\\
 &+ \big( 1- \Psi(h\upsilon,\zeta_{h,1}^{0}(t)) \bar{b}_1(h\upsilon)\big)\frac{\omega^2(\zeta_{h,1}^{0}(t))-\omega_0^2}{\epsilon^2}
|\zeta_{h,2}^1(t)|^2 +\mathcal{O}(\epsilon).
\end{aligned}
\end{equation*}
   The following result is used in the above derivation
$$\mathcal{V}(
\zeta(t))=\frac{\omega^2(\zeta_{h,1}^{0}(t))-\omega_0^2}{\epsilon^2}
|\zeta_{h,2}^1(t)|^2+\mathcal{O}(\epsilon),$$ which is obtained by
considering the definition of $\mathcal{V}$ and the bounds given in
Theorem \ref{second invariant thm}.

In terms of  the above analysis and Theorem \ref{second invariant
thm}, it is easy to obtain the second statement of the theorem.
 \hfill
 \end{proof}

 \begin{rem}
Following Section XIII. 7 of \cite{hairer2006} again, we can obtain
a long-time near-conservation of the modified energy $H_h$ as stated
in Theorem \ref{pre mod H}.
 \end{rem}

\section{Conclusions} \label{sec:conclusions}

The long-time behaviour    of an ERKN integrator was analysed
rigorously for solving highly oscillatory Hamiltonian systems with a
slowly varying, solution-dependent high frequency. A
varying-frequency modulated Fourier expansion of the ERKN integrator
was developed and  studied. On the basis of the expansion, we showed
that the symmetric ERKN integrator has two almost-invariants and
approximately conserves the modified action and energy over long
times.

 This paper also shown an important observation from the numerical
experiment that the numerical behaviour of the ERKN integrator is
better than that of the SV method when solving highly oscillatory
Hamiltonian systems with a slowly varying, solution-dependent high
frequency (see Fig. \ref{P5-2}, Fig. \ref{fig0}, Tables \ref{ta
re1}-\ref{ta re2}).

Last but not least, we note that   as a preliminary  research on the
long-term analysis of  ERKN integrators for  Hamiltonian systems
with a solution-dependent high frequency, this paper is restricted
to the exemplary case of the ERKN integrators with a fixed
frequency. It is interesting but not so easy to extend the analysis
of the present paper to ERKN integrators with a fixed but different
frequency at each step. This issue will be our next investigation in
future. If the high frequency also depends on $p$, the Hamiltonian
ordinary differential equations  will become a new scheme (not a
second-order system any more). Both the integrators and the analysis
presented in this paper are not applicable. The investigation of
this general system will be our another issue considered in future.

\section*{Acknowledgements}

The authors are sincerely thankful to  Professor Christian Lubich
for his helpful comments and discussions on the topic of modulated
Fourier expansions. We also thank him for drawing our attention to
Hamiltonian systems with a solution-dependent high frequency.


\begin{thebibliography}{}

\bibitem{Arnold97}\textsc{V.I. Arnold, V.V. Kozlov, and A.I. Neishtadt},   \emph{Mathematical Aspects of
Classical and Celestial Mechanics}, Springer, Berlin, 1997.


\bibitem{Bornemann98} \textsc{F. Bornemann,} \emph{Homogenization in Time of Singularly
Perturbed Mechanical Systems}, Springer LNM, no. 1687, Springer,
London, 1998.

\bibitem{Cohen06}\textsc{D. Cohen}, \emph{Conservation properties of numerical integrators for
highly oscillatory Hamiltonian systems}, IMA J. Numer. Anal.,
 26 (2006), pp. 34-59.

\bibitem{Cohen15}\textsc{D. Cohen, L. Gauckler, E. Hairer, and C. Lubich},
\emph{Long-term analysis of numerical integrators for oscillatory
Hamiltonian systems under minimal non-resonance conditions}, BIT,
 55 (2015), pp. 705-732.

\bibitem{Cohen05}\textsc{D. Cohen,  E. Hairer, and C. Lubich},  \emph{Numerical energy
conservation for multi-frequency oscillatory differential
equations},  BIT, 45 (2005), pp. 287-305.

\bibitem{Cohen08-1}\textsc{D. Cohen,  E. Hairer, and C. Lubich}, \emph{Conservation of energy,
momentum and actions in numerical discretizations of nonlinear wave
equations}, Numer. Math.,  110 (2008), pp.  113-143.


\bibitem{Cotter04} \textsc{C.J. Cotter and S. Reich},  \emph{ Adiabatic invariance and applications: From
molecular dynamics to numerical weather prediction},  BIT Numer.
Math.,  44(2004) pp. 439--455.



 \bibitem {Faou14} \textsc{E. Faou, L. Gauckler, and C. Lubich},  \emph{Plane wave stability of the split-step Fourier method for the
nonlinear Schr\"{o}dinger equation}, Forum. Math. Sigma., 2 (2014)
e5 (45 pages)

\bibitem{Gauckler17} \textsc{L. Gauckler,}  \emph{Numerical long-time energy conservation for the
nonlinear Schr\"{o}dinger equation}, IMA J. Numer. Anal.
 37 (2017) pp. 2067--2090.

\bibitem{Gauckler13}\textsc{L. Gauckler, E. Hairer, and C. Lubich},   \emph{Energy separation
in oscillatory Hamiltonian systems without any non-resonance
condition}, Comm. Math. Phys.,  321 (2013), pp.  803-815.

\bibitem{Hairer00}\textsc{E. Hairer and C. Lubich}, \emph{Long-time energy conservation of
numerical methods for oscillatory differential equations}, SIAM J.
Numer. Anal.,  38 (2000), pp.  414-441.

\bibitem{Hairer08}\textsc{E. Hairer and C. Lubich}, \emph{Spectral semi-discretisations of
weakly nonlinear wave equations over long times}, Found. Comput.
Math.,  8 (2008), pp.  319-334.

\bibitem{Hairer09}\textsc{E. Hairer and C. Lubich}, \emph{Oscillations over long times in
numerical Hamiltonian systems, in Highly oscillatory problems} (B.
Engquist, A. Fokas, E. Hairer, A. Iserles, eds.), London
Mathematical Society Lecture Note Series  366, Cambridge Univ. Press
(2009).

\bibitem{Hairer12-1}\textsc{E. Hairer and C. Lubich}, \emph{Modulated Fourier expansions for
continuous and discrete oscillatory systems}, Foundations of
Computational Mathematics: Budapest 2011 (F. Cucker et al., eds.),
Cambridge Univ. Press,   (2012), pp. 113-128.


\bibitem{Hairer13}\textsc{E. Hairer and C. Lubich}, \emph{Long-term control of oscillations in
differential equations}, Internat, Math. Nachr.,  223 (2013), pp.
1-16.

\bibitem{Hairer16}\textsc{E. Hairer and C. Lubich}, \emph{Long-term analysis of the
St\"{o}rmer-Verlet method for Hamiltonian systems with a
solution-dependent high frequency}, Numer. Math.,  134 (2016), pp.
119-138.


\bibitem {hairer2006} \textsc{E. Hairer, C. Lubich, and G. Wanner}, \emph{Geometric Numerical
Integration: Structure-Preserving Algorithms for Ordinary
Differential Equations}, 2nd edn.  Springer-Verlag, Berlin,
Heidelberg, 2006.

\bibitem{iserles08}\textsc{A. Iserles and S.P.  N{\o}rsett},   \emph{From high oscillation to rapid
approximation I: Modified Fourier expansions}, IMA J. Numer. Anal.
 28 (2008), pp. 862-887.


\bibitem{McLachlan14} \textsc{R.I. McLachlan and A. Stern}, \emph{Modified trigonometric integrators}, SIAM J. Numer.
Anal.,  52 (2014), pp.  1378-1397.

 \bibitem{CiCP(2017)_Mei_Liu_Wu} \textsc{L. Mei, C. Liu, and X. Wu}, \emph{An essential
extension of the finite-energy condition for extended
Runge-Kutta-Nystr\"{o}m integrators when applied to nonlinear wave
equations}, Commun. Comput. Phys., 22 (2017), pp. 742-764.


\bibitem{Reich00}   \textsc{S. Reich},  \emph{Smoothed Langevin dynamics of highly oscillatory
systems}, Phys. D: Nonlinear Phenomena,  138 (2000) pp. 210--224.

\bibitem{Rubin57} \textsc{H. Rubin and  P. Ungar},   \emph{Motion under a strong constraining
force}, Commun. Pure Appl. Math.,  10 (1957) pp. 65--87.

\bibitem{Sanz-Serna09}
\textsc{J.M. Sanz-Serna},   \emph{Modulated Fourier expansions and
heterogeneous multiscale methods}, IMA J. Numer. Anal.,  29 (2009),
pp.  595-605.

\bibitem{Stern09} \textsc{A. Stern and E. Grinspun},
\emph{Implicit-explicit variational integration of highly
oscillatory problems}, Multi.  Model. Simul.,  7 (2009), pp.
1779-1794.

  \bibitem{wang-2016}  \textsc{B. Wang, A. Iserles, and X. Wu}, \emph{Arbitrary-order trigonometric  Fourier collocation methods for
multi-frequency oscillatory  systems},  Found. Comput. Math.,  16
(2016), pp.  151-181.

% \bibitem{wang2017-ANM} \textsc{B. Wang, F. Meng,
% and Y. Fang}, \emph{Efficient implementation of RKN-type Fourier collocation methods
% for second-order differential equations},  Appl. Numer. Math., 119 (2017), pp.  164-178.



 \bibitem{17-new}  \textsc{B. Wang  and X. Wu},
\emph{A long-term numerical energy-preserving analysis of symmetric
and/or symplectic extended RKN integrators for efficiently solving
highly oscillatory Hamiltonian systems}, Preprint,
arXiv:1712.04969v3


\bibitem{wang2018-IMA}\textsc{B. Wang  and X. Wu},  \emph{The formulation and analysis of energy-preserving
schemes for solving high-dimensional nonlinear Klein-Gordon
equations},  IMA J. Numer. Anal. (2018) DOI: 10.1093/imanum/dry047.


\bibitem{wu2017-JCAM}  \textsc{B. Wang,  X. Wu, and F. Meng}, \emph{Trigonometric collocation methods based
on Lagrange basis polynomials for multi-frequency oscillatory
second-order differential equations},
 J. Comput. Appl. Math., 313  (2017), pp.  185-201.

% \bibitem{wu2013-ANM}\textsc{B. Wang, X. Wu,   and J. Xia},   \emph{Error bounds for explicit ERKN integrators for systems
% of multi-frequency oscillatory second-order differential equations},  Appl. Numer.
% Math., 74 (2013), pp.  17-34.

%\bibitem{wang2017-Cal}
%\textsc{B. Wang, H. Yang, and F. Meng}, \emph{Sixth order symplectic
%and symmetric explicit ERKN schemes for solving multi-frequency
%oscillatory nonlinear Hamiltonian equations}, Calcolo, 54 (2017),
%pp. 117-140.


\bibitem{wubook2015} \textsc{X. Wu, K. Liu, and W. Shi}, \emph{Structure-preserving algorithms for oscillatory differential
equations II},  Springer-Verlag, Heidelberg,   2015.

\bibitem{xinyuanbook2018}  \textsc{X. Wu and B. Wang}, \emph{Recent
Developments in Structure-Preserving Algorithms for Oscillatory
Differential Equations}, Springer Nature Singapore Pte Ltd (2018)

\bibitem{wu2012}\textsc{X. Wu, B. Wang,  and J. Xia},  \emph{Explicit symplectic multidimensional
exponential fitting modified Runge-Kutta-Nystr\"{o}m methods},  BIT,
 52 (2012), pp. 773-795.

\bibitem {wu2010-1} \textsc{X. Wu, X. You, W. Shi, and B. Wang},  \emph{ERKN integrators for
systems of oscillatory second-order differential equations}, Comput.
Phys. Comm., 181   (2010), pp.  1873-1887.


\bibitem{wu2013-book}  \textsc{X. Wu, X. You, and B. Wang},  \emph{Structure-preserving algorithms for oscillatory
differential equations}, Springer-Verlag, Berlin, Heidelberg, 2013.



\end{thebibliography}
\end{document}